\RequirePackage{fix-cm}
\RequirePackage{amsthm}
\documentclass[sn-mathphys-num,pdflatex]{sn-jnl}

\usepackage{graphicx}
\usepackage{multirow}
\usepackage{amsmath,amssymb,amsfonts}
\usepackage{mathrsfs}
\usepackage[title]{appendix}
\usepackage{xcolor}
\usepackage{textcomp}
\usepackage{manyfoot}
\usepackage{booktabs}
\usepackage{algorithm}
\usepackage{algorithmicx}
\usepackage{algpseudocode}
\usepackage{listings}
\usepackage{anyfontsize}

\usepackage{hyperref}
\usepackage{lmodern}
\usepackage{enumerate} 
\usepackage{enumitem}

\newtheorem{theorem}{Theorem}[section]
\newtheorem{definition}{Definition}[section]
\newtheorem{lemma}{Lemma}[section]
\newtheorem{corollary}{Corollary}[section]
\newtheorem{example}{Example}[section]
\newtheorem{remark}{Remark}[section]

\newcounter{assumcounter}
\newtheorem{assum}[assumcounter]{Assumption}

\newcounter{oraclecounter}
\newtheorem{oracle}[oraclecounter]{Oracle}

\newcommand{\eps}{\varepsilon}
\newcommand{\R}{\mathbb{R}}
\newcommand{\N}{\mathbb{N}}
\newcommand{\C}{\mathcal{C}}
\newcommand{\Bcl}{\bar{B}}
\newcommand{\lc}[1]{lower-$\C^{#1}$}
\newcommand{\calM}{\mathcal{M}}
\newcommand{\calR}{\mathcal{R}}
\newcommand{\T}{\mathcal{T}}
\newcommand{\deriv}{D}

\newcommand{\algApproxW}{Alg.\ \ref{algo:approx_W}}
\newcommand{\algGlobal}{Alg.\ \ref{algo:global_method}}
\newcommand{\ipopt}{\texttt{IPOPT}}
\newcommand{\mexipopt}{\texttt{mexIPOPT}}

\DeclareMathOperator*{\conv}{conv}
\DeclareMathOperator*{\rb}{rb}
\DeclareMathOperator*{\aff}{aff}
\DeclareMathOperator*{\pr}{pr}
\DeclareMathOperator*{\argmin}{arg\,min}
\DeclareMathOperator*{\vspan}{span}

\begin{document}

\title{Enclosing minima in nonsmooth optimization via trust regions of higher-order cutting-plane models}

\author*[1]{\fnm{Bennet} \sur{Gebken}}\email{bennet.gebken@tum.de}

\author[1]{\fnm{Michael} \sur{Ulbrich}}\email{mulbrich@ma.tum.de}

\affil[1]{\orgdiv{Department of Mathematics}, \orgname{Technical University of Munich}, \orgaddress{\street{Boltzmannstr. 3}, \city{Garching b. München}, \postcode{85748}, \country{Germany}}}

\abstract{
    We propose a globally convergent trust-region bundle method for minimizing \lc{2} functions using higher-order cutting-plane models. Under certain growth assumptions on the objective around its minimum, the method is able to compute infinitely many trust regions of decreasing size that contain the minimum. We show that these growth assumptions are satisfied for certain finite max-type functions with sharp or quadratic growth. Enclosing the minimum in this way can be used to initialize local superlinearly convergent methods, which we demonstrate in numerical experiments.
}

\keywords{nonsmooth optimization, nonconvex optimization, superlinear convergence, bundle method, trust-region method, \lc{2}}
\pacs[MSC Classification]{90C30, 90C26, 49J52}

\maketitle

\section{Introduction} \label{sec:introduction}

Given a nonsmooth function $f$ that has a minimum $x^*$, the goal of this work is to construct a method that is able to compute a sequence of balls that contain $x^*$ with a radius that vanishes in the limit. To achieve this, we construct a globally convergent trust-region method that generates a sequence of iterates $(x^j)_j$ such that for a vanishing sequence of trust-region radii $(\Delta_j)_j$, $x^*$ lies inside the closed $\Delta_j$-ball around $x^j$ (i.e., inside the trust region) for infinitely many $j \in \N$. The main motivation of this work is the globalization of the trust-region bundle method of \cite{GU2026a} (written in parallel to the current work), which achieves local R-superlinear convergence (of serious steps) for nonsmooth optimization problems with \lc{2} functions \cite{RW1998}. To prove the local convergence, it had to be assumed that the initial trust-region radius is small enough and that the initial trust region contains the minimum $x^*$. By applying the local method with initial point $x^j$ and initial radius $\Delta_j$ for each $j \in \N$ within the global method, it can be guaranteed that eventually, there is an application of the local method that converges R-superlinearly to the minimum.

While the main motivation of our method is the globalization of the method from \cite{GU2026a}, its properties are also interesting in a broader context: In general, superlinearly convergent methods for black-box nonsmooth optimization problems, where no additional structure of the objective function $f : \R^n \rightarrow \R$ is explicitly available, are difficult to construct \cite{MS2012}. One of the challenges is that building an accurate local model of a nonsmooth function around its minimum $x^*$ requires more than a single point close to $x^*$. For example, the $\mathcal{V}\mathcal{U}$\emph{-algorithm} \cite{LOS1999,MS2005} requires the automatic identification of the $\mathcal{V}$- and $\mathcal{U}$-spaces at $x^*$, which, in general, requires knowledge of the entire Clarke subdifferential at $x^*$ (cf.\ \cite{MS2005}, Sec.\ 2.1). For the standard oracle, which only provides a single subgradient at each point, approximating these spaces thus requires more than a single point close to $x^*$. The $k$\emph{-bundle Newton method} \cite{LW2019} requires an initial bundle of points that are close enough to $x^*$ and each lie in a different ``smooth region'' of $f$ around $x^*$ (cf.\ \cite{LW2019}, Thm.\ 5.11). Finally, the method \emph{SuperPolyak} \cite{CD2024} for minimizing functions with a sharp minimum (and with known optimal value $f(x^*)$) is based on generating multiple points close enough around $x^*$ with linearly independent subgradients (cf.\ \cite{CD2024}, p.\ 1679 and Lem.\ 10). So in the general context of superlinearly convergent methods for nonsmooth optimization problems, our method can be seen as a way to obtain arbitrarily small neighborhoods of $x^*$ in which the required local information of $f$ can be obtained. 

The theory of our method is based on the quantity
\begin{align} \label{eq:def_z_star_Lambda}
	\Lambda^p(x,\Delta) := \frac{f(x) - f(z^*(x,\Delta))}{\Delta^p} \geq 0,
	\ \text{where} \
	z^*(x,\Delta) \in \argmin_{z \in \Bcl_\Delta(x)} f(x),
\end{align}
$x \in \R^n$, $\Delta > 0$, $\Bcl_\Delta(x) := \{ y \in \R^n : \| y - x \| \leq \Delta \}$, $\| \cdot \|$ is the Euclidean norm and $p \in \N$. In words, $\Lambda^p(x,\Delta)$ is the best possible decrease of $f$ in the trust region $\Bcl_\Delta(x)$ relative to the trust-region radius $\Delta$ to the power $p$. 
To obtain a tool for showing that $x^*$ lies inside the trust region $\Bcl_\Delta(x)$, we consider the following property:
\begin{equation} \label{eq:property_P} \tag{P}
    \begin{aligned}
        &\text{For } p \in \N \text{ and } x^* \in \R^n, \text{ there are an open neighborhood } U \subseteq \R^n \\
        &\text{of } x^* \text{ and a constant } C > 0 \text{ such that } \Lambda^p(x,\Delta) \geq C \text{ for all } x \in U \\ 
        &\text{and } \Delta > 0 \text{ with } x^* \notin \Bcl_\Delta(x).
    \end{aligned}
\end{equation}
If the property \eqref{eq:property_P} holds, then as long as $x^* \notin \Bcl_\Delta(x)$, the value of $f$ can be reduced by at least $C \Delta^p$. Furthermore, if $\lim_{j \rightarrow \infty} \Lambda^p(x^j,\Delta_j) = 0$ for sequences $(x^j)_j$ and $(\Delta_j)_j$ with $x^j \rightarrow x^*$ and $\Delta_j \rightarrow 0$, then we have the desired inclusion $x^* \in \Bcl_{\Delta_j}(x^j)$ for all $j$ large enough. We will show that \eqref{eq:property_P} is related to $p$\emph{-order growth} of $f$ around $x^*$, which means that there are an open neighborhood $U \subseteq \R^n$ of $x^*$ and some constant $\beta > 0$ such that 
\begin{align} \label{eq:def_p_order_growth}
    f(x) \geq f(x^*) + \beta \| x - x^* \|^p \quad \forall x \in U.
\end{align}
More precisely, we derive sufficient conditions for \eqref{eq:property_P} to hold for \lc{2} functions with \emph{sharp} ($p = 1$) or \emph{quadratic} ($p = 2$) growth. While the quantity $\Lambda^p(x,\Delta)$ is purely theoretical, we can approximate it in practice by using the higher-order cutting-plane models (of order $q \geq p$) introduced in \cite{GU2026a}. This allows for the construction of a trust-region method that is globally convergent to critical points for general \lc{2} functions, and where $x^*$ lies inside the trust region infinitely many times if \eqref{eq:property_P} holds. While several globally convergent trust-region methods for black-box nonsmooth optimization problems have already been proposed \cite{SZ1992,HUL1993b,ANR2015,M2023}, to the best of the authors' knowledge, the question whether they provably generate vanishing trust regions containing the minimum has not been considered before.

The remainder of this work is structured as follows: In Sec.\ \ref{sec:preliminaries} we introduce the notation and the basics that we need, including the relevant results from \cite{GU2026a}. In Sec.\ \ref{sec:algorithm_and_convergence} we derive our method, prove global convergence (in the sense of criticality of its accumulation points) for \lc{2} functions, and prove that infinitely many trust regions contain $x^*$ if \eqref{eq:property_P} holds. Sec.\ \ref{sec:property_P_for_max_type} is concerned with proving that \eqref{eq:property_P} holds for the case where the objective is the maximum of finitely many smooth functions with sharp or quadratic growth. In Sec.\ \ref{sec:numerical_experiments} we verify the properties of our method in numerical experiments and combine it with the methods from \cite{GU2026a} and \cite{LW2019} to obtain an overall globally convergent method with local R-superlinear convergence. Finally, in Sec.\ \ref{sec:discussion_and_outlook}, we discuss our results and directions for future research.

A Matlab implementation of our method, including scripts for the reproduction of all experiments shown in this work, is available at \url{https://github.com/b-gebken/higher-order-trust-region-bundle-method}.

\section{Preliminaries} \label{sec:preliminaries}

In this section, we first introduce our class of objective functions and some basics of nonsmooth analysis. To have a self-contained work, we then introduce the higher-order cutting-plane models from \cite{GU2026a}, including all results we will use here. 

\subsection{Nonsmooth analysis} \label{subsec:prelim_nonsmooth_analysis}

    The objective functions we consider are functions that can be written as the maximum of smooth functions. For the purpose of estimating the remainder in $q$-order Taylor expansions later on, we assume that these smooth functions are at least $\C^{q+1}$ (i.e., $(q+1)$-times continuously differentiable). More formally, for $q \in \N$, we assume there is a global \lc{q+1} representation (see, e.g., \cite{RW1998}, Def.\ 10.29) in the following sense:
     
    \begin{assum} \label{assum:B1}
        For $q \in \N$ there are a compact topological space $S$ and $\C^{q+1}$ functions $f_s : \R^n \rightarrow \R$, $s \in S$, such that 
        \begin{align*}
            f(x) = \max_{s \in S} f_s(x) \quad \forall x \in \R^n
        \end{align*}
        and $f_s$ and all its partial derivatives up to order $q+1$ depend continuously on $(s,x) \in S \times \R^n$. 
    \end{assum}   
     
	For a function $f$ satisfying \ref{assum:B1}, the \emph{active set} at a point $x \in \R^n$ is the set $A(x) := \{ s \in S : f_s(x) = f(x) \}$. The functions $f_s$, $s \in S$, are called \emph{selection functions}, and a selection function $f_s$ is called \emph{active} at $x$ if $s \in A(x)$. By \cite{RW1998}, Thm.\ 10.31, $f$ is locally Lipschitz continuous with its Clarke subdifferential \cite{C1990} given by $\partial f(x) = \conv(\{ \nabla f_s(x) : s \in A(x) \})$, where $\conv(\cdot)$ denotes the convex hull. For a locally Lipschitz continuous $f$, we say that a point $x$ is a \emph{critical point} of $f$ if $0 \in \partial f(x)$. For functions satisfying \ref{assum:B1}, this means that there is a convex combination of the gradients of active selection functions at $x$ which is zero. If $S$ is finite, then $f$ is called a \emph{finite max-type function}. The only difference of \ref{assum:B1} to the definition of \lc{q+1} functions is the fact that the representation in \ref{assum:B1} has to hold globally for any $x \in \R^n$. By \cite{RW1998}, Cor.\ 10.34 and Prop.\ 10.54, any \lc{2} function has a representation as in \ref{assum:B1} on any bounded subset of $\R^n$ and for any $q \in \N$. Since the method we derive in this work always generates bounded sequences (for functions with bounded sublevel sets), assuming a global representation as in \ref{assum:B1} for a \lc{2} function is thus no practical restriction, and merely simplifies the presentation of our results.
    
    For the proof of global convergence of our method, we will make use of the \emph{Goldstein} $\eps$\emph{-subdifferential} \cite{G1977}. For $x \in \R^n$ and $\eps \geq 0$, it can be defined as
	\begin{align*}
		\partial_\eps f(x) := \conv \left( \bigcup_{y \in \Bcl_\eps(x)} \partial f(y) \right).
	\end{align*}	 
	Let $g_\eps(x)$ be the element in $\partial_\eps f(x)$ with the smallest norm. Then either $g_\eps(x) = 0$ or 
    \begin{align*}
         f \left( x - \eps \frac{g_\eps(x)}{\| g_\eps(x) \|}  \right) - f(x) \leq -\eps \| g_\eps(x) \|
    \end{align*}
    (see, e.g., Sec.\ 2 in \cite{G2024b}). In particular, for $\Lambda^1$ as in \eqref{eq:def_z_star_Lambda} and any $\Delta > 0$, it holds
    \begin{align} \label{eq:lambda_goldstein_estimate}
        \Lambda^1(x,\Delta)
        \geq \frac{f(x) - f(x - \Delta \frac{g_\Delta(x)}{\| g_\Delta(x) \|})}{\Delta}
        \geq \frac{\Delta \| g_\Delta(x) \|}{\Delta}
        = \| g_\Delta(x) \|.
    \end{align}
    If $(x^j)_j \subseteq \R^n$ and $(\Delta_j)_j \in \R^{>0}$ are sequences with $x^j \rightarrow \bar{x} \in \R^n$, $\Delta_j \rightarrow 0$, and $g_{\Delta_j}(x^j) \rightarrow 0$, then $\bar{x}$ is critical (see, e.g., \cite{G2022}, Lem.\ 4.4.4). This means that $\Lambda^1(x^j,\Delta_j)$ vanishing can be used to show convergence to critical points, which will be our strategy for proving convergence in Sec.\ \ref{sec:algorithm_and_convergence}.

\subsection{Higher-order cutting-plane models}

	Before introducing the higher-order cutting-plane models from \cite{GU2026a}, we first introduce the notation for higher-order Taylor expansion (from \cite{B1964}) and our oracle assumption on the availability of derivatives. To this end, for a selection function $f_s$ and $m \in \{ 1, \dots, q \}$, $y, z, v \in \R^n$, define $\deriv^{0} f_s(y)(v)^0 := f_s(y)$ and
	\begin{equation*}
		\begin{aligned}
			\deriv^{m} f_s(y)(v)^m &:= \sum_{i_1 = 1}^n \cdots \sum_{i_m = 1}^n \partial_{i_1} \cdots \partial_{i_m} f_s(y) v_{i_1} \cdots v_{i_m}, \\
			T^q f_s(z,y) &:= \sum_{m = 0}^q \frac{1}{m!} \deriv^{m} f_s(y)(z - y)^m.
		\end{aligned}
	\end{equation*}
	(Then $\deriv^{1} f_s(y)(v)^1 = \nabla f_s(y)^\top v$ and $\deriv^{2} f_s(y)(v)^2 = v^\top \nabla^2 f_s(y) v$.) Since $f_s$ is $\C^{q+1}$, Taylor's theorem (see, e.g., \cite{B1964}, Thm.\ 20.16) shows that for any $y, z \in \R^n$, there is some $a \in \conv(\{ y, z \})$ such that $f_s(z) = T^q f_s(z,y) + R^{q+1}(z,y)$ for
	\begin{align} \label{eq:Taylor_remainder}
		R^{q+1}(z,y) = \frac{1}{(q+1)!} \deriv^{q+1} f_s(a)(z - y)^{q+1}.
	\end{align}
    At several points throughout this work, we will use the fact that continuity of the partial derivatives of $f_s$ of order $q+1$ implies that for any bounded, convex set $V$, there is a constant $K > 0$ such that $|R^{q+1}(z,y)| \leq K \| z - y \|^{q+1}$ for all $y, z \in V$. Concerning the availability of higher-order derivatives for our method, we make the following oracle assumption:
     
	\begin{oracle} \label{oracle:1}
		For a function $f : \R^n \rightarrow \R$ satisfying \ref{assum:B1} and for each $x \in \R^n$, we have access to the objective value $f(x)$ and the maps $v \mapsto \deriv^{m} f_{s(x)}(x)(v)^m$ for some $s(x) \in A(x)$ and all $m \in \{1, \dots, q\}$.
	\end{oracle}
     
	Note that we only assume that we have access to the derivatives of a selection function $f_{s(x)}$, but not to the index $s(x)$ itself or any other information of $f_{s(x)}$. Further details of this oracle are discussed in \cite{GU2026a}, Sec.\ 3 and 7.

	Now let $W \subseteq \R^n$ be a nonempty, finite set and assume that $f$ satisfies \ref{assum:B1}. The $q$\emph{-order cutting-plane model} of $f$ for the ``bundle'' $W$ is defined as  
	\begin{align*}
		\T^{q,W}(z) 
		:= \max_{y \in W} T^q f_{s(y)}(z,y)
		= \max_{y \in W} \sum_{m = 0}^q \frac{1}{m!} \deriv^{m} f_{s(y)}(y)(z - y)^m.
	\end{align*}
    For $q = 1$ this model is a standard cutting-plane model. A visualization for larger $q$ is shown in \cite{GU2026a}, Fig.\ 1. Due to the piecewise nature of $f$, an error estimate for this model can only be obtained with respect to the selection functions that have been captured via $W$. More formally, for $s(W) := \{ s(y) : y \in W \}$ consider
    \begin{align*}
        f^W(z) := \max_{y \in W} f_{s(y)}(z), \quad \calR^{q,W}(z) := f^W(z) - \T^{q,W}(z).
    \end{align*}
    Clearly, if $s(W) = S$ then $f^W = f$ and $\calR^{q,W}$ is the error with respect to the actual objective function. In general, the following estimate for $\calR^{q,W}$ on a trust region $\Bcl_\Delta(x)$ holds (cf.\ \cite{GU2026a}, Lem.\ 3.1):
     
    \begin{lemma} \label{lem:q_order_error_estimate}
        Let $q \in \N$ and assume that $f$ satisfies \ref{assum:B1}. Then for every bounded set $V \subseteq \R^n$ and every $\hat{\Delta} > 0$ there is some $K \geq 0$ such that
        \begin{align*}
            \max_{z \in \Bcl_\Delta(x)} |\calR^{q,W}(z)| \leq K \Delta^{q+1}
        \end{align*}
        for all $x \in V$, $\Delta \in [0,\hat{\Delta}]$, and finite, nonempty sets $W \subseteq \Bcl_\Delta(x)$.
    \end{lemma}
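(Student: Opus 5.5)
The plan is to bound $|\calR^{q,W}(z)|$ termwise through the Taylor remainder of each captured selection function. Fix $z \in \Bcl_\Delta(x)$. Since $f^W(z) = \max_{y \in W} f_{s(y)}(z)$ and $\T^{q,W}(z) = \max_{y \in W} T^q f_{s(y)}(z,y)$ are maxima over the same finite index set $W$, we use the elementary inequality $|\max_i a_i - \max_i b_i| \leq \max_i |a_i - b_i|$. This gives
\begin{align*}
    |\calR^{q,W}(z)| \leq \max_{y \in W} \left| f_{s(y)}(z) - T^q f_{s(y)}(z,y) \right| = \max_{y \in W} |R^{q+1}_{s(y)}(z,y)|,
\end{align*}
where $R^{q+1}_{s(y)}$ denotes the Taylor remainder \eqref{eq:Taylor_remainder} of $f_{s(y)}$. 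It therefore suffices to bound each remainder by $K \|z-y\|^{q+1}$ with a constant $K$ that is uniform in $s \in S$. Since $y, z \in \Bcl_\Delta(x)$, we have $\|z - y\| \leq 2\Delta$, so such a bound yields $K 2^{q+1} \Delta^{q+1}$; renaming the constant finishes the argument.

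For the uniform constant, let $V' := \conv(V) + \Bcl_{\hat{\Delta}}(0)$. This set is bounded and convex, and its closure $\overline{V'}$ is compact. For $x \in V$ and $\Delta \leq \hat{\Delta}$, both $y$ and $z$, and hence the intermediate point $a \in \conv(\{y,z\})$, lie in $\Bcl_\Delta(x) \subseteq V'$. By \eqref{eq:Taylor_remainder},
\begin{align*}
    |R^{q+1}_s(z,y)| \leq \frac{1}{(q+1)!} \sum_{i_1,\dots,i_{q+1}} |\partial_{i_1}\cdots\partial_{i_{q+1}} f_s(a)| \, \|z-y\|^{q+1}.
\end{align*}
Assumption \ref{assum:B1} states that all partial derivatives of order $q+1$ depend continuously on $(s,x)$. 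They are therefore bounded on the compact set $S \times \overline{V'}$, say by $M$. This gives $|R^{q+1}_s(z,y)| \leq \frac{n^{q+1} M}{(q+1)!} \|z-y\|^{q+1}$ for all $s \in S$.

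The only delicate point is this uniformity over $s \in S$. Because $s(y)$ is chosen by the oracle, we have no control over which selection function appears, so a per-function constant is not enough. Compactness of $S$ together with the joint continuity in \ref{assum:B1} is exactly what provides the uniform bound. The case $\Delta = 0$ is trivial, since then $W = \{x\}$ and $z = x$, giving zero remainder.
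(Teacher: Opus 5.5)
Your proof is correct and takes essentially the same route the paper relies on. The paper defers the proof to \cite{GU2026a}, Lem.~3.1, and its preliminaries invoke exactly the uniform remainder bound $|R^{q+1}(z,y)| \leq K\|z-y\|^{q+1}$ on bounded convex sets: compare the two maxima over the common index set $W$, bound each Taylor remainder uniformly in $s$ via compactness of $S$ and the joint continuity in \ref{assum:B1}, and use $\|z-y\| \leq 2\Delta$.
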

     
    For $x \in \R^n$ and $\Delta > 0$, the trust-region subproblem induced by the higher-order cutting-plane model is
    \begin{align} \label{eq:def_bar_z}
        \min_{z \in \Bcl_\Delta(x)} \T^{q,W}(z), 
    \end{align}
    which can be rewritten as the smooth, constrained problem
    \begin{equation} \label{eq:bar_z_epigraph}
        \begin{aligned}
            \min_{z \in \R^n, \theta \in \R} \ & \theta \\
            \text{s.t.} \ & T^q f_{s(y)}(z,y) \leq \theta \quad \forall y \in W,\\
            & \| z - x \|^2 \leq \Delta^2.
        \end{aligned}
    \end{equation}
    Let $\bar{z}^{q,W}(x,\Delta)$ be a solution of \eqref{eq:def_bar_z}. For the sake of brevity, we write $\bar{z}^W = \bar{z}^{q,W}(x,\Delta)$ whenever the context allows. Clearly, the set $W$ has to be ``rich enough'' to have a chance to have $f(\bar{z}^{q,W}(x,\Delta)) < f(x)$. In \cite{GU2026a}, $W$ was computed using \algApproxW{}, which is a subroutine that iteratively adds new elements to $W$.
    \begin{algorithm} 
        \caption{Compute $W$}
        \label{algo:approx_W}
        \begin{algorithmic}[1] 
            \Require Oracle \ref{oracle:1}, point $x \in \R^n$, radius $\Delta > 0$, $q \in \N$, finite, nonempty set $W^1 \subseteq \Bcl_\Delta(x)$, $\sigma \in (0,1)$, threshold $c  > 0$.
            \For{$i = 1, 2, \dots$}
                \State Compute $\bar{z}^{W^i} = \bar{z}^{q,W^i}(x,\Delta)$ (cf.\ \eqref{eq:def_bar_z}, \eqref{eq:bar_z_epigraph}). \label{state:approx_W_solve_subproblem}
                \If{$f(\bar{z}^{W^i}) - \T^{q,W^i}(\bar{z}^{W^i}) \leq \min(\Delta^{q + \sigma},c)$} \label{state:approx_W_stopping_criterion}
                    \State Stop.
                \Else
                    \State Set $W^{i+1} = W^i \cup \{ \bar{z}^{W^i} \}$.
                \EndIf
            \EndFor
        \end{algorithmic}
    \end{algorithm}
    It is based on the observation that for all $y \in W$ we have $f(y) - \T^{q,W}(y) \leq 0$. This implies that if $f(\bar{z}^W) - \T^{q,W}(\bar{z}^W) > 0$, then $\bar{z}^W \notin W$, so adding $\bar{z}^W$ to $W$ leads to an augmented model.
    To have finite termination, in its original form in \cite{GU2026a}, the inequality $f(\bar{z}^W) - \T^{q,W}(\bar{z}^W) \leq \Delta^{q + \sigma}$ for $\sigma \in (0,1)$ is used as a stopping criterion. In the current work, we use the slightly modified term $\min(\Delta^{q + \sigma},c)$ with a threshold $c > 0$ for the stopping criterion, which allows us to compute more accurate models when $\Delta$ is large. Without having to change the proof, the following behavior of \algApproxW{} can be shown (cf.\ \cite{GU2026a}, Lem.\ 4.1):
     
    \begin{lemma} \label{lem:algo_approx_W_termination}
        Let $q \in \N$ and assume that $f$ satisfies \ref{assum:B1}. Let $x \in \R^n$.
        \begin{enumerate}[label=(\alph*)]
            \item \algApproxW{} terminates.
            \item If $S$ is finite, then there is some $\hat{\Delta} > 0$ such that for all $\Delta \in (0,\hat{\Delta}]$, \algApproxW{} terminates in at most $|S|$ iterations.
        \end{enumerate}
    \end{lemma}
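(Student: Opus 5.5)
We prove (a) by contradiction, using compactness of $\Bcl_\Delta(x)$ together with the fact that every new bundle element carries an exact Taylor cut. Suppose \algApproxW{} never stops. Then it produces an infinite sequence of subproblem solutions $z^i := \bar{z}^{W^i} \in \Bcl_\Delta(x)$ with
\begin{align*}
f(z^i) - \T^{q,W^i}(z^i) > \min(\Delta^{q+\sigma},c) =: \eta > 0 \quad \text{for all } i.
\end{align*}
Since $W^{j+1} = W^j \cup \{z^j\}$, every $z^j$ with $j < i$ belongs to $W^i$. Hence
\begin{align*}
\T^{q,W^i}(z^i) \geq T^q f_{s(z^j)}(z^i,z^j).
\end{align*}
Combining this with $f(z^j) = f_{s(z^j)}(z^j) = T^q f_{s(z^j)}(z^j,z^j)$ gives
\begin{align*}
\eta < f(z^i) - f(z^j) + T^q f_{s(z^j)}(z^j,z^j) - T^q f_{s(z^j)}(z^i,z^j).
\end{align*}

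The key step is to show that the right-hand side is small when $\|z^i - z^j\|$ is small, uniformly in $j$. Two facts give this.
\begin{itemize}
\item $f$ is locally Lipschitz, so it is uniformly continuous on the compact ball $\Bcl_\Delta(x)$.
\item By \ref{assum:B1}, the derivatives $\deriv^m f_s(y)$ for $m \le q$ are continuous in $(s,y)$ on the compact set $S \times \Bcl_\Delta(x)$, hence uniformly bounded there. So $z \mapsto T^q f_s(z,y)$ is Lipschitz on $\Bcl_\Delta(x)$ with a constant $L$ independent of $s$ and $y \in \Bcl_\Delta(x)$.
\end{itemize}
By compactness, $(z^i)$ has a convergent subsequence. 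Taking consecutive indices $j<i$ in that subsequence with $\|z^i-z^j\|$ arbitrarily small makes the right-hand side tend to $0$, which contradicts $\eta>0$. This uniform Lipschitz bound on the Taylor polynomials is the only technical point in (a).

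For (b), let $S$ be finite. Since $|s(W)|\le|S|$, the idea is that each non-terminating iteration must reveal a new selection index. By Lemma \ref{lem:q_order_error_estimate}, applied with $V=\{x\}$, there is $K$ with $|\calR^{q,W}(z)| \le K\Delta^{q+1}$ on $\Bcl_\Delta(x)$ for all $\Delta\le\hat\Delta$ and all admissible $W$. Choose $\hat\Delta$ small enough that $K\Delta^{q+1} < \min(\Delta^{q+\sigma},c)$ for all $\Delta \in (0,\hat\Delta]$. This is possible since $q+1>q+\sigma$.

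Now suppose iteration $i$ does not stop. If $s(z^i)\in s(W^i)$, then
\begin{align*}
f(z^i) = f_{s(z^i)}(z^i) \le f^{W^i}(z^i),
\end{align*}
so
\begin{align*}
f(z^i) - \T^{q,W^i}(z^i) \le \calR^{q,W^i}(z^i) \le K\Delta^{q+1},
\end{align*}
and the stopping test would hold, a contradiction. Therefore $s(z^i)\notin s(W^i)$, and $|s(W^{i+1})| \ge |s(W^i)|+1$. Since $|s(W^1)|\ge1$ and $|s(W^i)|\le|S|$, at most $|S|-1$ iterations can fail to stop. The algorithm therefore terminates by iteration $|S|$.

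The main subtlety in (b) is that $\hat\Delta$ depends on $x$ through $K$, which matches the statement since $x$ is fixed.
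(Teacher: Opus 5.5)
Your proof is correct. The paper does not reprove this lemma: it defers to \cite{GU2026a}, Lem.~4.1, and notes that the argument carries over unchanged, since the modified threshold $\min(\Delta^{q+\sigma},c)$ is still positive and still dominates $K\Delta^{q+1}$ for small $\Delta$. Your argument is the standard one behind that citation. For (a), it is the compactness/cutting-plane argument using that earlier trial points lie in later bundles; for (b), the estimate of Lemma~\ref{lem:q_order_error_estimate} forces a new selection index in every non-terminating iteration. One cosmetic fix in (b): since $K$ in Lemma~\ref{lem:q_order_error_estimate} depends on the radius bound, first fix some bound (say $1$) to obtain $K$, and only then shrink $\hat{\Delta}$.
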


\section{Algorithm and global convergence} \label{sec:algorithm_and_convergence}

In this section, we derive our method and prove its global convergence. (As in \cite{NW2006}, p.\ 40, ``global convergence'' refers to criticality of accumulation points.) The general idea is to use the subproblem \eqref{eq:def_bar_z} in a trust-region framework to generate a sequence of points with (sufficiently) decreasing objective values. Due to the local nature of the Taylor expansions in the model $\T^{q,W}$, this requires a mechanism for controlling the trust-region radius $\Delta$, which we derive in the following. We begin by introducing our notation. For simplicity, we use a predetermined, vanishing sequence $(\Delta_j)_j \subseteq \R^{>0}$ for the consecutive values of the trust-region radius. For each $j \in \N$ let $N_j \in \N$ be the number of steps we perform with the same trust-region radius $\Delta_j$. For ease of notation, we denote the corresponding iterates by $x^{j,i}$, where the index $j$ indicates which radius is currently used and the index $i$ enumerates over the iterates for this radius. More formally, for some initial point $x^{1,0} \in \R^n$, we have
\begin{align*}
    x^{j,i+1} = \bar{z}^{q,W_{j,i}}(x^{j,i},\Delta_j)
    \quad \forall j \in \N, i \in \{0,\dots,N_j - 1\},
\end{align*}
where $x^{j+1,0} = x^{j, N_{j}}$ and $W_{j,i} \subseteq \Bcl_{\Delta_j}(x^{j,i})$.
We have to decide when to change the radius $\Delta_j$, i.e., how many steps $N_j$ to perform for each $j$, to assure that the objective value decreases from $x^{j,i}$ to $x^{j,i+1}$ and that the decrease is sufficient to achieve global convergence. To this end, denote $\bar{z}^{j,i} := \bar{z}^{q,W_{j,i}}(x^{j,i},\Delta_j)$ and $z^*_{j,i} := z^*(x^{j,i},\Delta_j)$ (cf.\ \eqref{eq:def_z_star_Lambda}). By Lem.\ \ref{lem:q_order_error_estimate}, if all $x^{j,i}$ lie in a bounded set, then there is some $K > 0$ such that the best possible point $z^*_{j,i}$ in the trust region and the solution $\bar{z}^{j,i}$ of the subproblem are related via
\begin{equation} \label{eq:deriv_2}
    \begin{aligned}
        f(z^*_{j,i}) 
        &\geq f^{W_{j,i}}(z^*_{j,i})
        = \T^{q,W_{j,i}}(z^*_{j,i}) + \calR^{q,W_{j,i}}(z^*_{j,i}) \\
        &\geq \T^{q,W_{j,i}}(\bar{z}^{j,i}) + \calR^{q,W_{j,i}}(z^*_{j,i}) \\
        &= f(\bar{z}^{j,i}) + (\T^{q,W_{j,i}}(\bar{z}^{j,i}) - f(\bar{z}^{j,i})) + \calR^{q,W_{j,i}}(z^*_{j,i}) \\
        &\geq f(\bar{z}^{j,i}) + (\T^{q,W_{j,i}}(\bar{z}^{j,i}) - f(\bar{z}^{j,i})) - K \Delta^{q+1}.
    \end{aligned}
\end{equation}
In particular, for any $p \in \N$, we have
\begin{align*}
    \Lambda^p(x^{j,i},\Delta_j)
    &= \frac{f(x^{j,i}) - f(z^*_{j,i})}{\Delta_j^p} \\
    &\leq \frac{f(x^{j,i}) - f(\bar{z}^{j,i})}{\Delta_j^p} + \frac{f(\bar{z}^{j,i}) - \T^{q,W_{j,i}}(\bar{z}^{j,i})}{\Delta_j^p} + K \Delta_j^{q-p+1}.
\end{align*}
If $W_{j,i}$ is computed via \algApproxW{}, then by the stopping criterion in Step \ref{state:approx_W_stopping_criterion}, we obtain
\begin{align*}
    \Lambda^p(x^{j,i},\Delta_j)
    &\leq \frac{f(x^{j,i}) - f(\bar{z}^{j,i})}{\Delta_j^p} + \Delta_j^{q-p+\sigma} + K \Delta_j^{q-p+1}.
\end{align*}
If $f$ is bounded below, then the fraction $(f(x^{j,i}) - f(\bar{z}^{j,i})) / \Delta_j^p$ must become arbitrarily small (or negative) when increasing $i$ for any fixed $j$. In particular, for any vanishing sequence $(\tau_j)_j \subseteq \R^{>0}$ and for each $j \in \N$, we can choose $N_j$ such that
\begin{align} \label{eq:deriv_1}
    \Lambda^p(x^{j,N_j},\Delta_j) < \tau_j + \Delta_j^{q-p+\sigma} + K \Delta_j^{q-p+1}.
\end{align}
Denote $x^j := x^{j,N_j} = x^{j+1,0}$ for $j \in \N$. If $q \geq p$ then the right-hand side of \eqref{eq:deriv_1} vanishes for $j \rightarrow \infty$, so $\Lambda^p(x^j,\Delta_j) \rightarrow 0$. Since $\Lambda^p(x^j,\Delta_j) > \Lambda^1(x^j,\Delta_j)$ for $\Delta_j < 1$, it also holds $\Lambda^1(x^j,\Delta_j) \rightarrow 0$, which implies that all accumulation points of $(x^j)_j$ are critical points of $f$ by Subsec.\ \ref{subsec:prelim_nonsmooth_analysis} (cf.\ \eqref{eq:lambda_goldstein_estimate}). The resulting trust-region method is summarized in \algGlobal{}. (Note that the variable $x^j$ in Step \ref{state:global_method_change_j} is only used for ease of notation in our analysis.)
\begin{algorithm} 
    \caption{Globally convergent higher-order trust-region bundle method}
    \label{algo:global_method}
    \begin{algorithmic}[1] 
        \Require Oracle \ref{oracle:1}, initial point $x^0 = x^{1,0} \in \R^n$, vanishing sequences $(\Delta_j)_j$, $(\tau_j)_j \subseteq \R^{> 0}$, $p \in \N$, model order $q \in \N$, parameters $\sigma \in (0,1)$ and $c > 0$ (for \algApproxW{}).
        \For{$j = 1,2,\dots$}
            \For{$i = 0,1,\dots$}
                \State Compute $W_{j,i} \subseteq \Bcl_{\Delta_j}(x^{j,i})$ via \algApproxW{} (with $W^1 = \{ x^{j,i} \}$) and \label{state:global_method_approx_W}
                \Statex \hspace{\algorithmicindent}\hspace{\algorithmicindent}set $\bar{z}^{j,i} = \bar{z}^{q,W_{j,i}}(x^{j,i},\Delta_j)$.
                \If{$(f(x^{j,i}) - f(\bar{z}^{j,i})) / \Delta_j^p < \tau_j$} \label{state:global_method_decrease_condition}
                    \State Break $i$-loop.
                \Else
                    \State Set $x^{j,i+1} = \bar{z}^{j,i}$. \label{state:global_method_new_iterate}
                \EndIf
            \EndFor
            \State Set $x^{j+1,0} = x^{j,i}$ and $x^j = x^{j,i}$. \label{state:global_method_change_j}
        \EndFor
    \end{algorithmic}
\end{algorithm}
The derivation in this section directly leads to the following convergence result:
 
\begin{theorem} \label{thm:global_convergence}
    Let $q, p \in \N$, $q \geq p$, and $x^0 \in \R^n$. Assume that $f$ satisfies \ref{assum:B1} and that the sublevel set $\{ x \in \R^n : f(x) \leq f(x^0) \}$ is bounded. Then the sequence $(x^j)_j$ generated by \algGlobal{} satisfies $\Lambda^p(x^j,\Delta_j) \rightarrow 0$. Furthermore, $(x^j)_j$ has an accumulation point and all accumulation points are critical points of $f$.
\end{theorem}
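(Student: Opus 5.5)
The plan is to make the derivation preceding \algGlobal{} rigorous, in four steps.

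\emph{Step 1: monotonicity and boundedness.} Every accepted step in Step \ref{state:global_method_new_iterate} satisfies
\[
f(x^{j,i}) - f(\bar{z}^{j,i}) \geq \tau_j \Delta_j^p > 0,
\]
so $f$ is strictly decreasing along all iterates $x^{j,i}$. Hence every iterate lies in the sublevel set $L := \{ x : f(x) \leq f(x^0) \}$, which is bounded by assumption. Since $f$ is continuous, $L$ is compact and $f$ is bounded below on $L$ by some $f_{\min}$.

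\emph{Step 2: termination of the loops.} \algApproxW{} terminates in each call by Lem.\ \ref{lem:algo_approx_W_termination}(a). The inner $i$-loop is finite for every $j$: each pass through it decreases $f$ by at least $\tau_j\Delta_j^p$, so at most $(f(x^0)-f_{\min})/(\tau_j\Delta_j^p)$ passes are possible. Thus $N_j$ is finite, the break in Step \ref{state:global_method_decrease_condition} occurs, and $x^j = x^{j,N_j}$ is well defined with
\[
\frac{f(x^j) - f(\bar{z}^{j,N_j})}{\Delta_j^p} < \tau_j .
\]

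\emph{Step 3: the bound on $\Lambda^p$.} I apply Lem.\ \ref{lem:q_order_error_estimate} with the bounded set $V := L$ and $\hat{\Delta} := \sup_j \Delta_j$, which is finite because the sequence converges. This gives a single constant $K$, valid for all $j$, since $W_{j,N_j} \subseteq \Bcl_{\Delta_j}(x^j)$ and $x^j \in L$. Chain \eqref{eq:deriv_2} then holds at $(j,N_j)$. Note that it needs only $\bar{z}^{j,N_j} \in \Bcl_{\Delta_j}(x^j)$, and not that $\bar{z}$ lies in $L$.

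Next I use the stopping test of \algApproxW{}:
\[
f(\bar{z}) - \T^{q,W}(\bar{z}) \leq \min(\Delta_j^{q+\sigma}, c) \leq \Delta_j^{q+\sigma}.
\]
Combining this with Step 2 yields
\[
0 \leq \Lambda^p(x^j,\Delta_j) < \tau_j + \Delta_j^{q-p+\sigma} + K\Delta_j^{q-p+1}.
\]
Since $q \geq p$, both exponents are positive. Together with $\tau_j \to 0$ and $\Delta_j \to 0$, this gives $\Lambda^p(x^j,\Delta_j) \to 0$.

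\emph{Step 4: criticality of accumulation points.} The sequence $(x^j)_j$ lies in the compact set $L$, so it has an accumulation point $\bar{x}$ with $x^{j_k} \to \bar{x}$. For $\Delta_j < 1$ we have $\Delta_j^p \leq \Delta_j$, and hence $\Lambda^1(x^j,\Delta_j) \leq \Lambda^p(x^j,\Delta_j)$, because the numerator is nonnegative. So $\Lambda^1 \to 0$. By \eqref{eq:lambda_goldstein_estimate}, $\|g_{\Delta_{j_k}}(x^{j_k})\| \leq \Lambda^1(x^{j_k},\Delta_{j_k}) \to 0$. The cited result (\cite{G2022}, Lem.\ 4.4.4), applied with $\Delta_{j_k} \to 0$, then gives $0 \in \partial f(\bar{x})$.

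The main obstacle is Step 3: the constant $K$ must be uniform over all $j$. Uniformity holds because the base points $x^j$ stay in the fixed bounded set $L$ and the radii are bounded by $\hat{\Delta}$. I will also check that $z^*(x^j,\Delta_j)$ exists, which follows from continuity of $f$ and compactness of the ball.
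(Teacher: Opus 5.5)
Your proposal is correct and follows essentially the same route as the paper. Monotone decrease keeps all iterates in the compact sublevel set, which gives a uniform constant $K$ from Lem.\ \ref{lem:q_order_error_estimate} in \eqref{eq:deriv_2}; the break condition and the stopping test of \algApproxW{} then give \eqref{eq:deriv_1} and hence $\Lambda^p(x^j,\Delta_j) \rightarrow 0$; and $\Lambda^1 \leq \Lambda^p$ combined with \eqref{eq:lambda_goldstein_estimate} yields criticality, while your explicit bound $(f(x^0)-f_{\min})/(\tau_j\Delta_j^p)$ on the number of accepted steps makes rigorous the finiteness of each inner $i$-loop, which the paper only states informally.
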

\begin{proof}
    By construction, $(f(x^j))_j$ is a decreasing sequence, so $x^j$ lies in the bounded sublevel set $\{ x \in \R^n : f(x) \leq f(x^0) \}$ for all $j \in \N$. This implies that Lem.\ \ref{lem:q_order_error_estimate} can be applied as in inequality \eqref{eq:deriv_2}. Since $f$ is continuous, this sublevel set is compact, so that $(x^j)_j$ has an accumulation point. The rest of the proof follows via \eqref{eq:deriv_1} and Subsec.\ \ref{subsec:prelim_nonsmooth_analysis}, as discussed above.
\end{proof}

By Thm. \ref{thm:global_convergence}, if one is only concerned with global convergence, then it suffices to choose $q = p = 1$ in \algGlobal{}, in which case the model $\T^{q,W}$ is a standard cutting-plane model. However, if the property \eqref{eq:property_P} holds for one of the accumulation points, then choosing larger $p$ and $q$ immediately leads to the following corollary:
 
\begin{corollary} \label{cor:minimum_in_trust_region}
    Let $q, p \in \N$, $q \geq p$, and $x^0 \in \R^n$. Assume that $f$ satisfies \ref{assum:B1} and that the sublevel set $\{ x \in \R^n : f(x) \leq f(x^0) \}$ is bounded. Let $(x^j)_j$ be the sequence generated by \algGlobal{}. If \eqref{eq:property_P} holds for the chosen $p$ and an accumulation point $x^*$ of $(x^j)_j$, then $x^* \in \Bcl_{\Delta_j}(x^j)$ for infinitely many $j \in N$.
\end{corollary}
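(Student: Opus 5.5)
The plan is a direct contradiction argument that combines Thm.\ \ref{thm:global_convergence} with property \eqref{eq:property_P}. Since the assumptions of Thm.\ \ref{thm:global_convergence} hold, we already know that $\Lambda^p(x^j,\Delta_j) \rightarrow 0$ for $j \rightarrow \infty$. Let $U$ and $C > 0$ be the neighborhood and constant that \eqref{eq:property_P} provides for the accumulation point $x^*$ and the chosen $p$.

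Assume, to the contrary, that $x^* \in \Bcl_{\Delta_j}(x^j)$ holds for only finitely many $j$. Then there is some $j_0 \in \N$ with $x^* \notin \Bcl_{\Delta_j}(x^j)$ for all $j \geq j_0$. Because $x^*$ is an accumulation point of $(x^j)_j$, there is a subsequence $(x^{j_k})_k$ with $x^{j_k} \rightarrow x^*$. Since $U$ is an open neighborhood of $x^*$, we have $x^{j_k} \in U$ for all $k$ large enough. For every such $k$ with $j_k \geq j_0$, both hypotheses of \eqref{eq:property_P} are satisfied: $x^{j_k} \in U$, $\Delta_{j_k} > 0$, and $x^* \notin \Bcl_{\Delta_{j_k}}(x^{j_k})$. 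Property \eqref{eq:property_P} therefore gives $\Lambda^p(x^{j_k},\Delta_{j_k}) \geq C > 0$ for all sufficiently large $k$. This contradicts $\Lambda^p(x^{j_k},\Delta_{j_k}) \rightarrow 0$, which follows from Thm.\ \ref{thm:global_convergence}.

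Hence $x^* \in \Bcl_{\Delta_j}(x^j)$ for infinitely many $j$. In fact the argument shows more: the inclusion holds for all sufficiently large indices $j_k$ of any subsequence converging to $x^*$.

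There is no real obstacle here, since all the work is contained in Thm.\ \ref{thm:global_convergence}. The only point requiring care is that \eqref{eq:property_P} applies only to points in $U$. This is why we restrict to a subsequence converging to $x^*$ rather than arguing with the whole sequence, which may have other accumulation points outside $U$.
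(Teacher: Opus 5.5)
Your proof is correct and follows essentially the same route as the paper. Like the paper, you assume $x^*\notin\Bcl_{\Delta_j}(x^j)$ for all large $j$, use that infinitely many $x^j$ lie in the neighborhood $U$ from \eqref{eq:property_P} to get $\Lambda^p(x^j,\Delta_j)\geq C$ along those indices, and contradict $\Lambda^p(x^j,\Delta_j)\to 0$ from Thm.~\ref{thm:global_convergence}. Your closing remark is also valid (the inclusion holds for all large indices of any subsequence converging to $x^*$), since the same argument applies directly to that subsequence.
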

\begin{proof}
    Assume that $x^* \in \Bcl_{\Delta_j}(x^j)$ only holds finitely many times. Then there is some $N \in \N$ such that $x^* \notin \Bcl_{\Delta_j}(x^j)$ for all $j > N$. Since $x^*$ is assumed to be an accumulation point of $(x^j)_j$, there must be infinitely many $j > N$ with $x^j \in U$ for $U$ as in \eqref{eq:property_P}, which implies $\Lambda^p(x^j,\Delta_j) \geq C$ for such $j$ by \eqref{eq:property_P}. However, by Thm.\ \ref{thm:global_convergence}, it holds $\Lambda^p(x^j,\Delta_j) \rightarrow 0$, which is a contradiction.
\end{proof}

In words, Cor.\ \ref{cor:minimum_in_trust_region} shows that if \algGlobal{} converges to a point $x^*$ for which \eqref{eq:property_P} holds for the chosen $p$, and if the chosen model order $q$ is greater or equal to $p$, then $x^*$ is contained in infinitely many trust regions of vanishing size, which was the goal of this work. Unfortunately, it is not intuitively clear when \eqref{eq:property_P} holds for a \lc{2} function. As such, an analysis of \eqref{eq:property_P} is required to turn Cor.\ \ref{cor:minimum_in_trust_region} into a usable result, which is done in Sec.\ \ref{sec:property_P_for_max_type}. Further properties of \algGlobal{} and possible modifications will be discussed in Sec.\ \ref{sec:discussion_and_outlook}.

\section{Property (P) for finite max-type functions} \label{sec:property_P_for_max_type}

To motivate the strategy for this section, we first consider a necessary condition for \eqref{eq:property_P} to hold when $x^*$ is a local minimum. To this end, let $f : \R^n \rightarrow \R$ be a continuous function and assume that \eqref{eq:property_P} holds for a local minimum $x^*$ of $f$, $U \subseteq \R^n$, and $p \in \N$. Let $x \in U$. By considering the limit of $\Lambda^p(x,\Delta)$ for $\Delta$ approaching $\| x - x^* \|$ from below, it is easy to show that $\Lambda^p(x,\| x - x^* \|) \geq C$. Since $x^*$ is a local minimum, we can choose $U$ small enough to have $f(z^*(x,\| x - x^* \|)) = f(x^*)$. Then we see that
\begin{align*}
    f(x) - f(x^*)
    = f(x) - f(z^*(x,\| x - x^* \|))
    \geq C \| x - x^* \|^p
    \quad \forall x \in U,
\end{align*}
i.e., that $f$ has $p$-order growth around $x^*$ (cf.\ \eqref{eq:def_p_order_growth}). In other words, $p$-order growth is a necessary condition for property \eqref{eq:property_P} to hold (at local minima). Unfortunately, it is not sufficient, as the following example (from \cite{G2025}) shows:
 
\begin{example} \label{example:polynomial_growth_not_sufficient} 
    For $p \in \N$ consider the locally Lipschitz continuous function
    \begin{align*}
        f : \R \rightarrow \R, \quad x \mapsto
        \begin{cases}
            x^{p+1} \sin \left( \frac{1}{x} \right) + \frac{1}{p} |x|^p, & x \neq 0, \\
            0, & x = 0.
        \end{cases}
    \end{align*}
    Fig.\ \ref{fig:example_polynomial_growth_not_sufficient} shows the graph of $f$ for $p \in \{1,2,4\}$.
    \begin{figure}
        \centering
        \parbox[b]{0.32\textwidth}{
            \centering 
            \includegraphics[width=0.31\textwidth]{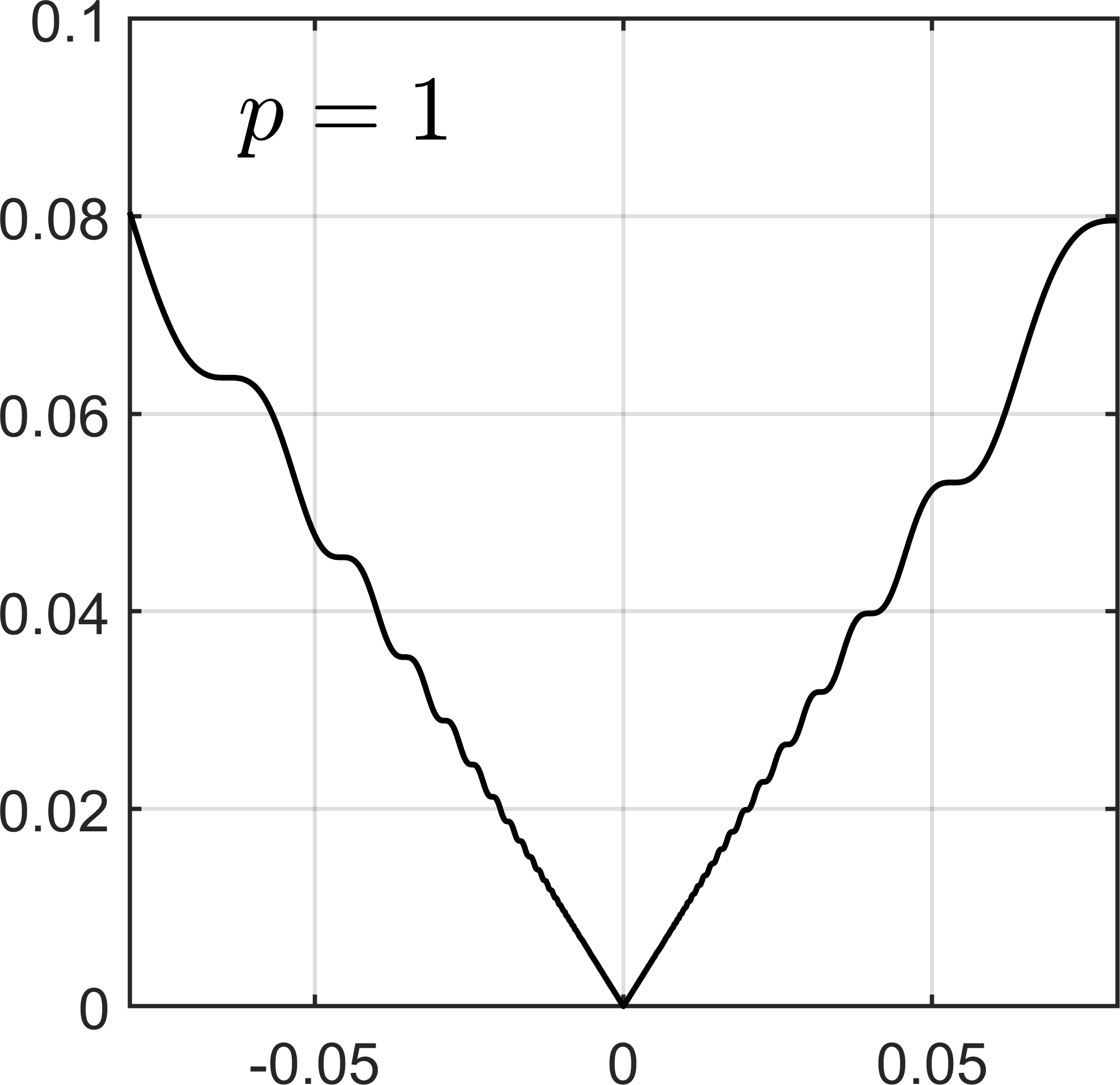}\\
        }
        \parbox[b]{0.32\textwidth}{
            \centering 
            \includegraphics[width=0.30\textwidth]{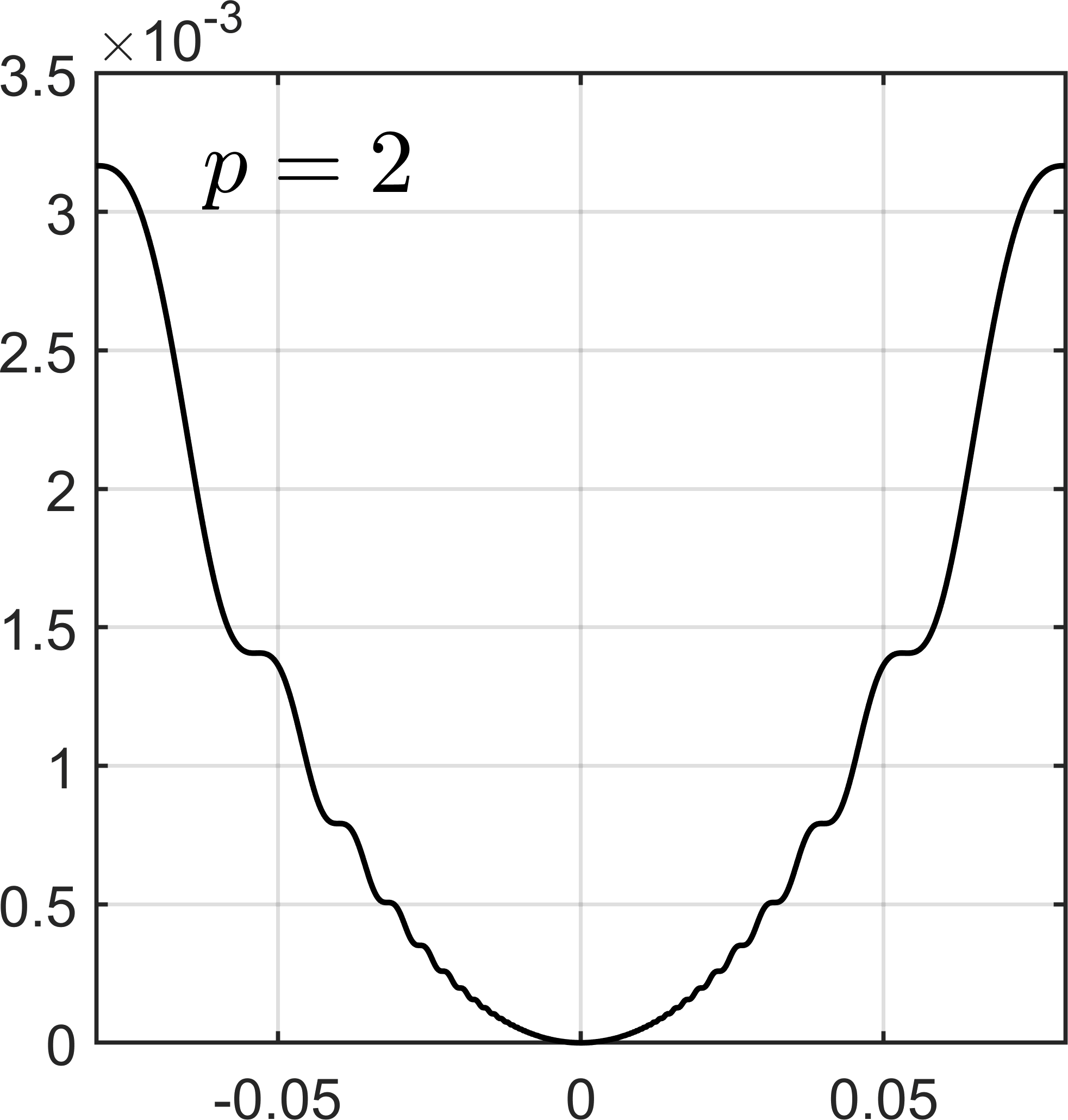}\\
        }
        \parbox[b]{0.32\textwidth}{
            \centering 
            \includegraphics[width=0.30\textwidth]{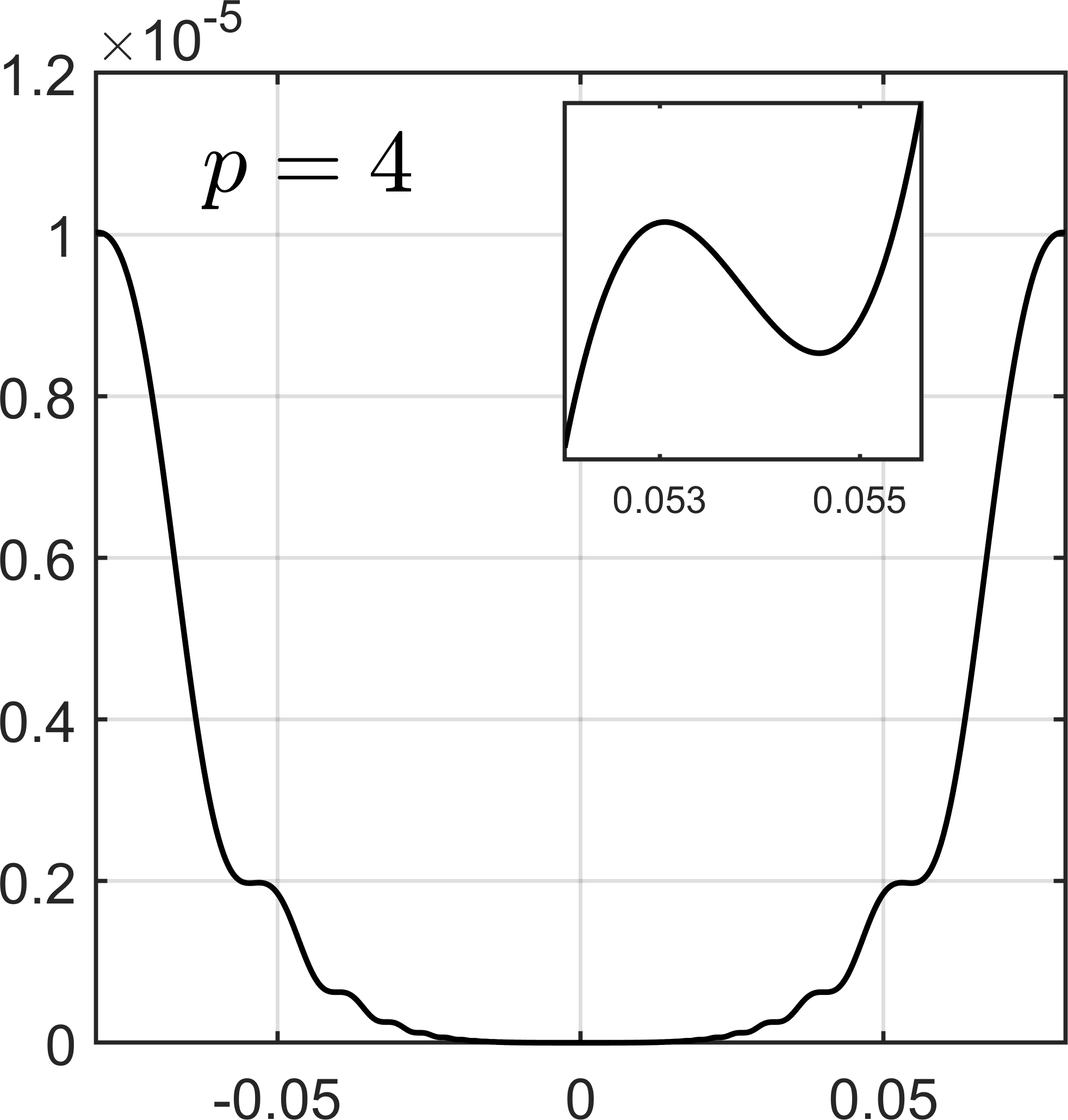}\\
        }
        \caption{The graph of $f$ in Ex.\ \ref{example:polynomial_growth_not_sufficient} for $p \in \{1,2,4\}$. For $p = 4$, the small additional plot shows a zoom for better visualization of the local minima (and maxima).}
        \label{fig:example_polynomial_growth_not_sufficient}
    \end{figure}
    It is possible to show that for $p \geq 2$, $f$ is $\C^1$, and for $p \geq 4$, $f$ is $\C^2$. Since the sine is bounded and $x^{p+1}$ vanishes faster than $|x|^p$ for $x \rightarrow 0$, it is easy to see that $x^* = 0$ is a local minimum around which $f$ grows with order $p$. However, it can be shown that there are infinitely many other local minima arbitrarily close to $x^*$. Since for each local minimum $\hat{x}$, $\Delta$ can be chosen small enough to have $\Lambda^p(\hat{x},\Delta) = 0$, this shows that \eqref{eq:property_P} cannot hold for $x^*$.
\end{example}
 
By the previous example, for general locally Lipschitz continuous functions, $p$-order growth alone is not enough to infer \eqref{eq:property_P}. While there is still hope that the situation is better for \lc{2} functions, we will only be able to infer \eqref{eq:property_P} by imposing additional assumptions. These assumptions include finiteness of $S$ in \ref{assum:B1}, which means that the functions are finite max-type functions. To not overload the current work, in this section, we only consider the growth orders $p = 1$ (i.e., sharp growth) and $p = 2$ (i.e., quadratic growth). The generalization to general \lc{2} functions (with infinite $S$) and arbitrary orders of growth will be discussed in the outlook in Sec.\ \ref{sec:discussion_and_outlook}.

As a start, for both cases, we require the technical result that the difference $z^*(x,\Delta) - x$ can be written as a scaled convex combination of gradients of selection functions active at $z^*(x,\Delta)$, which can be derived using the epigraph reformulation of the optimization problem in the definition of $z^*(x,\Delta)$ (cf.\ \eqref{eq:def_z_star_Lambda}). For the sake of brevity, we write $z^* = z^*(x,\Delta)$ whenever the context allows, and for ease of notation, we make the following definition:
 
\begin{definition} \label{def:simplex_lagrange}
    Let $S$ be a finite set. Denote the standard simplex by
    \begin{align*}
        \Omega_S := \left\{ \lambda \in \R^{|S|} : \lambda_s \geq 0 \ \forall s \in S, \ \sum_{s \in S} \lambda_s = 1 \right\}.
    \end{align*}
    If $f $ satisfies \ref{assum:B1} with finite index set $S$ and $x \in \R^n$ is a critical point of $f$, then we call a vector $\lambda \in \Omega_{A(x)}$ with $\sum_{s \in A(x)} \lambda_s \nabla f_s(x) = 0$ a \emph{Lagrange multiplier} of $f$ at $x$.
\end{definition}
 
Using this definition, the following formula for $z^*(x,\Delta) - x$ can be derived:
 
\begin{lemma} \label{lem:z_star_formula}
    Let $q \in \N$ and assume that $f$ satisfies \ref{assum:B1} with finite $S$. Let $x \in \R^n$ and $\Delta > 0$. If $z^* = z^*(x,\Delta)$ is not a critical point of $f$, then there is some $\lambda \in \Omega_{A(z^*)}$ such that
    \begin{align*}
         z^* - x = -\Delta \frac{\sum_{s \in A(z^*)} \lambda_s \nabla f_s(z^*)}{\| \sum_{s \in A(z^*)} \lambda_s \nabla f_s(z^*) \|}.
    \end{align*}
\end{lemma}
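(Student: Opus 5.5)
We will derive the formula from the KKT conditions of the epigraph reformulation of the problem defining $z^*$, namely
\begin{align*}
    \min_{z \in \R^n, \theta \in \R} \ \theta \quad \text{s.t.} \quad f_s(z) \leq \theta \ \ \forall s \in S, \quad \| z - x \|^2 \leq \Delta^2,
\end{align*}
which is a smooth problem since $S$ is finite and each $f_s$ is $\C^{q+1}$. The pair $(z^*, f(z^*))$ is a global solution, and the active constraints are exactly $f_s(z) \le \theta$ for $s \in A(z^*)$, possibly together with the ball constraint.

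The first step is to verify a constraint qualification at $(z^*, f(z^*))$. We will use Mangasarian--Fromovitz with the direction $d = (x - z^*, t) \in \R^n \times \R$ for large $t > 0$. Its inner product with the gradient $(\nabla f_s(z^*), -1)$ of each active max-constraint is $\nabla f_s(z^*)^\top (x - z^*) - t < 0$. If the ball constraint is active, then $z^* \neq x$, so its gradient $(2(z^* - x), 0)$ gives $-2\|z^* - x\|^2 < 0$. If the ball constraint is inactive, nothing further is needed. Hence MFCQ holds and KKT multipliers exist.

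The KKT conditions give $\mu_s \geq 0$ for $s \in A(z^*)$ and $\nu \geq 0$, with $\nu = 0$ if $\|z^* - x\| < \Delta$. The $\theta$-component yields $\sum_s \mu_s = 1$, so $\mu \in \Omega_{A(z^*)}$. The $z$-component yields $\sum_s \mu_s \nabla f_s(z^*) + 2\nu (z^* - x) = 0$.

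Write $g := \sum_s \mu_s \nabla f_s(z^*) \in \partial f(z^*)$. Since $z^*$ is not critical, $0 \notin \partial f(z^*)$, so $g \neq 0$. Then $\nu \neq 0$, which forces the ball constraint to be active, i.e., $\|z^* - x\| = \Delta$, and $z^* - x = -g/(2\nu)$. Taking norms gives $2\nu = \|g\|/\Delta$, and substituting back yields $z^* - x = -\Delta g/\|g\|$ with $\lambda := \mu$.

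The only delicate point is the constraint qualification, and the explicit MFCQ direction above handles it. Alternatively, Fritz John conditions can be used: a zero multiplier on $\theta$ would force the multipliers of the max-constraints to vanish, leaving $\nu(z^* - x) = 0$ with $\nu > 0$, which is impossible when the ball constraint is active (since then $z^* \neq x$).
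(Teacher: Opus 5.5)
Your proposal is correct and follows the same route as the paper's proof. Both take the KKT conditions of the epigraph reformulation under MFCQ, use the $\theta$-component to obtain $\lambda \in \Omega_{A(z^*)}$, and use non-criticality of $z^*$ to force a positive ball multiplier, which gives $\|z^*-x\| = \Delta$ and the normalized formula. The only difference is that you verify MFCQ explicitly via the direction $(x - z^*, t)$, whereas the paper merely states that this is easy to see.
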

\begin{proof}
    By definition, $z^*$ is a solution of
    \begin{equation*}
        \begin{aligned}
            \min_{z \in \R^n, \theta \in \R} \ & \theta \\
            \text{s.t.} \ & f_s(z) \leq \theta \quad \forall s \in S,\\
            & \tfrac{1}{2} \| z - x \|^2 \leq \tfrac{1}{2} \Delta^2.
        \end{aligned}
    \end{equation*}
    It is easy to see that the constraints satisfy the MFCQ (see, e.g., \cite{NW2006}), so the optimality conditions imply that there are $\lambda_s \geq 0$, $s \in S$, and $\lambda^b \geq 0$ such that
    \begin{align} \label{eq:proof_lem_z_star_formula_1}
        0 = 
        \begin{pmatrix}
            0 \\
            1
        \end{pmatrix}
        + \sum_{s \in S} \lambda_s
        \begin{pmatrix}
            \nabla f_s(z^*) \\
            -1
        \end{pmatrix}
        + \lambda^b
        \begin{pmatrix}
            z^* - x \\
            0
        \end{pmatrix},
    \end{align}
    with $\lambda^b = 0$ if $\| z^* - x \| < \Delta$ and $\lambda_s = 0$ if $s \notin A(z^*)$. In particular, $S$ in \eqref{eq:proof_lem_z_star_formula_1} can be replaced by $A(z^*)$. The second line then shows that $(\lambda_s)_{s \in A(z^*)} \in \Omega_{A(z^*)}$. Since $z^*$ is assumed to not be a critical point, $\lambda^b$ must be positive and $\| z^* - x \| = \Delta$. By \eqref{eq:proof_lem_z_star_formula_1}, this implies
    \begin{align*}
        \lambda^b = \frac{\| \sum_{s \in A(z^*)} \lambda_s \nabla f_s(z^*) \|}{\Delta},
    \end{align*}
    so
    \begin{align*}
        z^* - x 
        = -\frac{1}{\lambda^b} \sum_{s \in A(z^*)} \lambda_s \nabla f_s(z^*)
        = - \Delta \frac{\sum_{s \in A(z^*)} \lambda_s \nabla f_s(z^*)}{\| \sum_{s \in A(z^*)} \lambda_s \nabla f_s(z^*) \|},
    \end{align*}
    completing the proof.
\end{proof}

\subsection{Sharp growth}

    In this subsection, we consider functions with a sharp local minimum $x^* \in \R^n$, which means that \eqref{eq:def_p_order_growth} holds for $p = 1$. The following lemma shows that for functions satisfying \ref{assum:B1}, sharpness implies that the norm of all subgradients close to $x^*$ is bounded away from zero. (In particular, the situation for $p = 1$ in Ex.\ \ref{example:polynomial_growth_not_sufficient} cannot occur for such functions.)
     
    \begin{lemma} \label{lem:sharp_subgradient_bound}
        Let $q \in \N$ and assume that $f$ satisfies \ref{assum:B1}. If $f$ grows with order $p = 1$ and constant $\beta > 0$ around $x^* \in \R^n$, then there is an open neighborhood $U \subseteq \R^n$ of $x^*$ such that
        \begin{align*}
            \| \xi \| > \beta / 2 \quad \forall x \in U \setminus \{ x^* \}, \xi \in \partial f(x).
        \end{align*}
        In particular, $x^*$ is the only critical point of $f$ in $U$.
    \end{lemma}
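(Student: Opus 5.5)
Our plan is to argue by contradiction, combining a first-order expansion of the active selection functions with the sharp growth inequality. Suppose no such $U$ exists. Then there are sequences $x^k \to x^*$ with $x^k \neq x^*$ and $\xi^k \in \partial f(x^k)$ with $\|\xi^k\| \leq \beta/2$. By \ref{assum:B1} and \cite{RW1998}, Thm.\ 10.31, each $\xi^k$ has the form $\xi^k = \sum_s \lambda^k_s \nabla f_s(x^k)$, a convex combination over $s \in A(x^k)$. If $S$ is infinite, we use Carathéodory to take at most $n+1$ indices. The key identity is that $f_s(x^k) = f(x^k)$ for every active $s$. Set $d^k := x^* - x^k$ and $t_k := \|d^k\| \to 0$. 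For each active $s$, Taylor's theorem with the uniform remainder bound (continuity of second derivatives on $S \times$ a compact neighborhood) gives
\begin{align*}
f(x^*) \geq f_s(x^*) \geq f_s(x^k) + \nabla f_s(x^k)^\top d^k - K t_k^2 = f(x^k) + \nabla f_s(x^k)^\top d^k - K t_k^2 .
\end{align*}

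Next we take the convex combination with weights $\lambda^k_s$. This gives $f(x^*) \geq f(x^k) + (\xi^k)^\top d^k - K t_k^2$. By Cauchy--Schwarz, $(\xi^k)^\top d^k \geq -\|\xi^k\| t_k \geq -(\beta/2) t_k$. Sharp growth, valid for $x^k$ in the growth neighborhood, gives $f(x^k) \geq f(x^*) + \beta t_k$. Combining,
\begin{align*}
f(x^*) \geq f(x^*) + \beta t_k - \tfrac{\beta}{2} t_k - K t_k^2 ,
\end{align*}
that is, $\tfrac{\beta}{2} t_k \leq K t_k^2$. Since $t_k > 0$, this means $\beta/2 \leq K t_k \to 0$, a contradiction.

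The argument is direct, so no compactness extraction is really needed. We could even state it constructively: take $U$ to be a ball of radius $r < \beta/(2K)$ inside the growth neighborhood. For any $\xi \in \partial f(x)$ with $x \in U\setminus\{x^*\}$, the same computation yields $\|\xi\|\, t \geq -\xi^\top d \geq \beta t - K t^2 > \tfrac{\beta}{2} t$, hence $\|\xi\| > \beta/2$. The only point needing care, which is the main technical step, is obtaining a uniform constant $K$ bounding the second-order Taylor remainders of all $f_s$ on a fixed compact convex ball around $x^*$. This follows from continuity of the second partial derivatives on the compact set $S \times \Bcl_1(x^*)$, exactly as in the remark after \eqref{eq:Taylor_remainder} with $q=1$. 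The final claim follows at once: $0 \notin \partial f(x)$ for $x \in U\setminus\{x^*\}$, while $x^*$ is critical because it is a local minimum.
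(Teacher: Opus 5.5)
Your proof is correct, and it takes a different route from the paper's. The paper also argues by contradiction, but then proceeds as follows:
\begin{itemize}
\item it normalizes $d^j = (x^j - x^*)/\| x^j - x^* \|$ and extracts a subsequence with $d^j \rightarrow d$;
\item it invokes semismoothness of lower-$\C^1$ functions (\cite{S1981}) to identify $\lim_j (\xi^j)^\top d^j$ with the directional derivative $f'(x^*,d)$, which is then at most $\beta/2$;
\item it observes that sharp growth forces $f'(x^*,d) \geq \beta$, which is the contradiction.
\end{itemize}
You instead use the uniform first-order Taylor bound for the selections active at $x$. This gives the quadratic-error subgradient inequality $f(x^*) \geq f(x) + \xi^\top (x^* - x) - K \| x - x^* \|^2$ for all $\xi \in \partial f(x)$, which you combine directly with the growth inequality at $x$ itself, not along a limiting direction.

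What your route buys:
\begin{itemize}
\item It needs neither the semismoothness citation nor the subsequence extraction.
\item It yields an explicit neighborhood.
\item It gives the quantitative estimate $\| \xi \| \geq \beta - K \| x - x^* \|$, so any constant below $\beta$ works. The paper's argument also yields this, but only implicitly.
\end{itemize}
The paper's argument uses only first-order information, since semismoothness already holds for lower-$\C^1$ functions. Your approach loses no generality here either. Replace $K t^2$ by $t\,\omega(t)$, where $\omega$ is a uniform modulus of continuity of $(s,x) \mapsto \nabla f_s(x)$ on the compact set $S \times \Bcl_1(x^*)$; the same conclusion $\| \xi \| \geq \beta - \omega(\| x - x^* \|)$ follows.

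Two bookkeeping points. First, choose the radius of $U$ at most $1$ as well, so that both $x$ and $x^*$ lie in the convex ball $\Bcl_1(x^*)$ on which your constant $K$ is valid. Second, the Carath\'eodory reduction is unnecessary, since elements of a convex hull are finite convex combinations by definition.
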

    \begin{proof}
        Assume that this does not hold. Then there are sequences $(x^j)_j \subseteq \R^n$ and $(\xi^j)_j \subseteq \R^n$ with $x^j \rightarrow x^*$, $x^j \neq x^*$, $\xi^j \in \partial f(x^j)$, and $\| \xi^j \| \leq \beta/2$ for all $j \in \N$. For $j \in \N$ let $d^j = (x^j - x^*)/\| x^j - x^* \|$. Then $\| d^j \| = 1$ for all $j \in \N$, so by compactness, we can assume w.l.o.g.\ that $d^j \rightarrow d \in \R^n$ with $\| d \| = 1$. Denote $f'(x^*,d) := \lim_{t \searrow 0} (f(x^* + td) - f(x^*))/t$. Since $f$ is semismooth as a \lc{1} function (cf.\ \cite{S1981}, Thm.\ 3.9), we obtain 
        \begin{align*}
            f'(x^*,d) 
            = \lim_{j \rightarrow \infty} (\xi^j)^\top d^j 
            \leq \limsup_{j \rightarrow \infty} \| \xi^j \| \| d^j \|
            \leq \beta/2.
        \end{align*}
        However, the sharp growth of $f$ around $x^*$ implies
        \begin{align*}
            f'(x^*,d)
            = \lim_{t \searrow 0} \frac{f(x^* + td) - f(x^*)}{t}
            \geq \lim_{t \searrow 0} \frac{\beta t \| d \|}{t}
            = \beta,
        \end{align*}
        which is a contradiction.
    \end{proof}

    Lem.\ \ref{lem:sharp_subgradient_bound} shows that for $x$ close to $x^*$ and $x^* \notin \Bcl_\Delta(x)$, we can always apply Lem.\ \ref{lem:z_star_formula} to write $z^* - x$ as a scaled convex combination of gradients of active selection functions. In the following, we show that this can be used to obtain an estimate for $\Lambda^1(x,\Delta)$ in terms of the norm of certain subgradients close to $x^*$. Since these subgradients are bounded away from zero, this implies \eqref{eq:property_P}. More precisely, the following theorem holds:
     
    \begin{theorem} \label{thm:sharp_decrease_property}
        Let $q \in \N$ and assume that $f$ satisfies \ref{assum:B1} with finite $S$. If $f$ grows with order $p = 1$ around $x^* \in \R^n$, then \eqref{eq:property_P} holds (for $p = 1$).
    \end{theorem}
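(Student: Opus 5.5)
The plan is to choose $U$ as a small ball $B_r(x^*)$ and to get the decrease directly from the first-order optimality formula for $z^*$ in Lem.\ \ref{lem:z_star_formula}, combined with the subgradient bound of Lem.\ \ref{lem:sharp_subgradient_bound}.

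\emph{Choice of $U$.} Let $\beta > 0$ be the sharp growth constant and $U_0$ the neighborhood from Lem.\ \ref{lem:sharp_subgradient_bound}. Pick $r > 0$ with $\Bcl_{2r}(x^*) \subseteq U_0$. Since $S$ is finite and each $f_s$ is at least $\C^2$, the Taylor remainder estimate \eqref{eq:Taylor_remainder} (with order $1$) yields a uniform $K > 0$ such that
\[
|f_s(y) - f_s(z) - \nabla f_s(z)^\top (y - z)| \leq K \| y - z \|^2
\]
for all $s \in S$ and $y, z \in \Bcl_{2r}(x^*)$. If necessary, shrink $r$ further so that $2rK \leq \beta/4$. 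Then set $U := B_r(x^*)$ and $C := \beta/4$.

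\emph{Main estimate.} Let $x \in U$ and $\Delta > 0$ with $x^* \notin \Bcl_\Delta(x)$. Then $\Delta < \| x - x^* \| < r$, so $z^* = z^*(x,\Delta)$ lies in $\Bcl_\Delta(x) \subseteq \Bcl_{2r}(x^*) \subseteq U_0$ and $z^* \neq x^*$. By Lem.\ \ref{lem:sharp_subgradient_bound}, $z^*$ is therefore not critical. Lem.\ \ref{lem:z_star_formula} then provides $\lambda \in \Omega_{A(z^*)}$ with
\[
z^* - x = -\Delta \frac{g}{\| g \|}, \qquad g := \sum_{s \in A(z^*)} \lambda_s \nabla f_s(z^*).
\]
Since $g \in \partial f(z^*)$, Lem.\ \ref{lem:sharp_subgradient_bound} gives $\| g \| > \beta/2$.

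Fix $s \in A(z^*)$. Because $f(x) \geq f_s(x)$ and $f_s(z^*) = f(z^*)$, the Taylor bound gives
\[
f(x) - f(z^*) \geq \nabla f_s(z^*)^\top (x - z^*) - K \Delta^2 .
\]
Multiplying by $\lambda_s$ and summing over $s \in A(z^*)$ yields
\[
f(x) - f(z^*) \geq g^\top (x - z^*) - K\Delta^2 = \Delta \| g \| - K \Delta^2 > \tfrac{\beta}{2}\Delta - K\Delta^2 .
\]
Finally, $\Delta < r$ gives $K\Delta \leq \beta/4$, so $f(x) - f(z^*) \geq \tfrac{\beta}{4}\Delta$. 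This is $\Lambda^1(x,\Delta) \geq C$, which is \eqref{eq:property_P} for $p = 1$.

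\emph{Main obstacle.} The delicate step is guaranteeing that $z^*$ is non-critical and that the convex combination $g$ has norm bounded away from zero uniformly. This is exactly what Lem.\ \ref{lem:sharp_subgradient_bound} supplies, which is why $U$ must be chosen so that the whole ball $\Bcl_{2r}(x^*)$ lies inside its neighborhood. Everything else reduces to a uniform second-order Taylor bound. That bound is available because $S$ is finite and we work on a compact convex set.
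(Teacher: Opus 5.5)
Your proof is correct and uses the same ingredients as the paper's proof: Lem.~\ref{lem:z_star_formula} to express $z^*-x$, Lem.~\ref{lem:sharp_subgradient_bound} to rule out criticality of $z^*$ and bound $\|g\|$ below by $\beta/2$, and a first-order Taylor expansion of the convex combination $\sum_s \lambda_s f_s$ at $z^*$. The only difference is that you argue directly with a uniform remainder constant $K$, whereas the paper argues by contradiction along sequences $x^j \to x^*$, $\Delta_j \to 0$; your version makes the result quantitative ($C = \beta/4$) and avoids the paper's reduction to a constant active set.
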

    \begin{proof}
        \textbf{Part 1:} Assume that this lemma does not hold. Then for all open neighborhoods $U \subseteq \R^n$ of $x^*$ and all $C > 0$, there are $x \in U$ and $\Delta > 0$ with $x^* \notin \Bcl_\Delta(x)$ such that $\Lambda^1(x,\Delta) < C$. In particular, there are sequences $(x^j)_j \subseteq \R^n$ and $(\Delta_j)_j \subseteq \R^{>0}$ with $x^j \rightarrow x^*$, $\Delta_j \rightarrow 0$, $x^* \notin \Bcl_{\Delta_j}(x^j)$ for all $j \in \N$ and
        \begin{align} \label{eq:proof_lem_sharp_decrease_property_1}
            0 
            = \lim_{j \rightarrow \infty} \Lambda^1(x^j,\Delta_j)
            = \lim_{j \rightarrow \infty} \frac{f(x^j) - f(z^*(x^j,\Delta_j))}{\Delta_j}.
        \end{align}
        For ease of notation, denote $z^j := z^*(x^j,\Delta_j)$. Since $S$ is finite, we can assume w.l.o.g.\ that $A(z^j) = S'$ is constant for all $j \in \N$. \\
        \textbf{Part 2:} By Lem.\ \ref{lem:sharp_subgradient_bound} there is an open neighborhood $U \subseteq \R^n$ of $x^*$ such that $x^*$ is the only critical point of $f$ in $U$. In particular, we can assume w.l.o.g.\ that $z^j$ is not a critical point for any $j \in \N$. By Lem.\ \ref{lem:z_star_formula}, for each $j \in \N$, this implies $\| z^j - x^j \| = \Delta_j$ and that there is some $\lambda^j \in \Omega_{S'}$ such that
        \begin{align} \label{eq:proof_lem_sharp_decrease_property_2}
            z^j - x^j = -\Delta_j \frac{\sum_{s \in S'} \lambda_s^j \nabla f_s(z^j)}{\| \sum_{s \in S'} \lambda_s^j \nabla f_s(z^j) \|}.
        \end{align}
        \textbf{Part 3:} Denote $\psi_j := \sum_{s \in S'} \lambda_s^j f_s$. Since $S' = A(z^j)$ and $\lambda^j \in \Omega_{S'}$ for all $j \in \N$, it holds
        \begin{align*}
            f(x^j) \geq \psi_j(x^j)
            \quad \text{and} \quad
            f(z^j) = \psi_j(z^j)
            \quad \forall j \in \N.
        \end{align*}
        First-order Taylor expansion of $\psi_j$ at $z^j$ with remainder $R_j$ (note that $\psi_j$, and therefore the remainder, depends on $j$ due to $\lambda^j$) yields
        \begin{align*}
            &f(x^j) - f(z^j)
            \geq \psi_j(x^j) -\psi_j(z^j) 
            = \nabla \psi_j(z^j)^\top (x^j - z^j)
            + R_j \\
            &\stackrel{\eqref{eq:proof_lem_sharp_decrease_property_2}}{=} \Delta_j \left\| \nabla \psi_j(z^j) \right\|
            + R_j,
        \end{align*}
        so
        \begin{align} \label{eq:proof_lem_sharp_decrease_property_3}
            \Lambda^1(x^j,\Delta_j)
            = \frac{f(x^j) - f(z^j)}{\Delta_j}
            \geq \left\| \sum_{s \in S'} \lambda_s^j \nabla f_s(z^j) \right\| + \frac{R_j}{\Delta_j}
        \end{align}
        for all $j \in \N$. By the remainder formula \eqref{eq:Taylor_remainder} for $R_j$, since all $f_s$, $s \in S$, are $\C^{2}$ and since a convex combination of these selection functions was expanded, the second summand on the right-hand side in the above inequality vanishes. Combined with \eqref{eq:proof_lem_sharp_decrease_property_1}, this implies that the first summand vanishes as well. \\
        \textbf{Part 4:} Since $S' = A(z^j)$ and $\lambda^j \in \Omega_{S'}$ it holds $\sum_{s \in S'} \lambda_s^j \nabla f_s(z^j) \in \partial f(z^j)$ for all $j \in \N$. Since $z^j \neq x^*$ and $z^j \rightarrow x^*$ by construction, Lem.\ \ref{lem:sharp_subgradient_bound} shows that the norm of $\sum_{s \in S'} \lambda_s^j \nabla f_s(z^j)$ must be bounded away from zero, which is a contradiction to Part 3.
    \end{proof}

\subsection{Quadratic growth}

    In this subsection, we consider functions with a quadratic local minimum $x^* \in \R^n$, which means that \eqref{eq:def_p_order_growth} holds for $p = 2$. For the proof of Thm.\ \ref{thm:sharp_decrease_property} in the sharp case, the idea was to derive the lower estimate \eqref{eq:proof_lem_sharp_decrease_property_3} for $\Lambda^1(x,\Delta)$ in terms of the derivatives of the selection functions $f_s$. Since the subgradients were bounded away from zero close to $x^*$, this only required a first-order Taylor expansion and, in particular, only first-order derivatives. In contrast to this, in the quadratic case, subgradients may be arbitrarily small near $x^*$. This means is that an estimate for $\Lambda^2(x,\Delta)$ requires second-order Taylor expansion, involving second-order derivatives of the selection functions. In particular, we require that the second-order term in the Taylor expansion is bounded away from zero. Unfortunately, we are only able to show this under the following, relatively strong assumptions on the representation in \ref{assum:B1}:
     
    \begin{assum} \label{assum:B2}
        Assume that $f$ satisfies \ref{assum:B1} for $q \geq 2$ and that it grows with order $p = 2$ around $x^* \in \R^n$. Furthermore, assume that for the representation of $f$ in \ref{assum:B1}, it holds
        \begin{enumerate}[itemindent=25pt,label=(B2.\arabic*)]
            \item \label{enum:B2_1} $S = \{ 1, \dots, |S| \}$ is finite,
            \item \label{enum:B2_2} $A(x^*) = S$ and there is a Lagrange multiplier $\lambda^* \in \Omega_S$ at $x^*$ (cf.\ Def.\ \ref{def:simplex_lagrange}) with $\lambda^*_s > 0$ for all $s \in S$,
            \item \label{enum:B2_3} all $\nabla f_s(x^*)$, $s \in S$, are affinely independent. 
        \end{enumerate}
    \end{assum}
     
    The affine independence in \ref{enum:B2_3} implies that the Lagrange multiplier $\lambda^*$ in \ref{enum:B2_2} is unique. Furthermore, let $\calM := \{ x \in \R^n : A(x) = S \}$ be the set of points at which all selection functions are active. Then $\calM = \Phi^{-1}(0)$ for
    \begin{align*}
        \Phi : \R^n \rightarrow \R^{|S| - 1}, \quad x \mapsto 
        \left( f_2(x) - f_1(x), \dots, f_{|S|}(x) - f_1(x) \right)^\top.
    \end{align*}
    By \ref{enum:B2_3} and continuity of $\nabla f_s$, $s \in S$, there is an open neighborhood $U \subseteq \R^n$ of $x^*$ such that the Jacobian $D \Phi(x)$ has full rank for all $x \in U$. This implies that $\calM \cap U$ is an $(n - |S| + 1)$-dimensional manifold with tangent space $\ker(D \Phi(x))$ at $x \in \calM \cap U$ (cf.\ \cite{L2012}, Cor.\ 5.14 and Prop.\ 5.38). In particular, for $x \in \calM \cap U$ and any open interval $I \subseteq \R$ with $0 \in I$,
    \begin{equation} \label{eq:tangent_space_via_curves}
        \begin{aligned}
            \ker(D \Phi(x)) = \{ &v \in \R^n : \exists \ \C^1 \text{ curve } \varphi : I \rightarrow \calM \cap U \ \\
            &\text{with} \ \varphi(0) = x, \ \varphi'(0) = v \}
        \end{aligned}
    \end{equation}
    (cf.\ \cite{L2012}, p.\ 72). Assumptions similar to \ref{assum:B2} have appeared in other works, see, e.g., Def.\ 5.2 in \cite{LW2019} (``strong second-order conditions'') or Def.\ 3 in \cite{HL2023} (``strong $\C^2$ max function''). More generally, the manifold structure of $\calM$ is related to partial smoothness \cite{L2002} and $\mathcal{V} \mathcal{U}$-theory \cite{MS2005,LS2020}.

    Note that when restricted to $\calM$, $f$ behaves like any of its smooth selection functions $f_s$. This allows us to simplify the analysis of the behavior of $f$ around $x^*$ by splitting it up into the smooth behavior in $\calM$ and the nonsmooth behavior orthogonal to $\calM$ (as is done in $\mathcal{V} \mathcal{U}$-theory). Concerning the latter, the following lemma shows that $f$ grows sharply when moving orthogonally away from $\calM$. To prove it, it is worth using the concepts of \emph{affine hull} and \emph{relative boundary} of $\partial f(x^*)$ (see, e.g, \cite{B1983}, p.\ 19), which we denote by $\aff(\partial f(x^*))$ and $\rb(\partial f(x^*))$, respectively. By \ref{enum:B2_1} and \cite{B1983}, Exercise 3.1, $\rb(\partial f(x^*))$ is the set of convex combinations of gradients of selection functions where at least one coefficient is zero. (In particular, $\rb(\partial f(x^*))$ is compact.)
     
    \begin{lemma} \label{lem:quad_orth_lin_growth}
        Assume that $f$ satisfies \ref{assum:B2}.
        \begin{enumerate}[label=(\alph*)]
            \item There is a constant $\delta > 0$, an open neighborhood $U \subseteq \R^n$ of $x^*$ and some $\bar{t} > 0$ such that
            \begin{align*}
                \frac{f(y + tw) - f(y)}{t} > \delta \quad \forall y \in U \cap \calM, t \in (0,\bar{t}], w \in \ker(D \Phi(y))^\bot, \| w \| = 1.
            \end{align*}
            \item There is a constant $\delta > 0$ and an open neighborhood $U \subseteq \R^n$ of $x^*$ such that for every $x \in U \setminus \calM$ and every $\pr(x) \in \argmin_{y \in \calM} \| y - x \|$, it holds
            \begin{align*}
                \frac{f(x) - f(\pr(x))}{\| x - \pr(x) \|} > \delta.
            \end{align*}
        \end{enumerate}
    \end{lemma}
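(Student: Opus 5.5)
For (a), the plan is to use a first-order expansion of each selection function together with a compactness argument on $\rb(\partial f(x^*))$. Fix $y \in \calM$ near $x^*$ and a unit vector $w \in \ker(D\Phi(y))^\bot$. Since $A(y) = S$, we have $f(y) = f_s(y)$ for all $s \in S$, and hence
\begin{align*}
    \frac{f(y+tw)-f(y)}{t} \geq \max_{s \in S} \frac{f_s(y+tw) - f_s(y)}{t} \geq \max_{s \in S} \nabla f_s(y)^\top w - K t,
\end{align*}
where $K$ bounds the second derivatives on a bounded neighborhood. It therefore suffices to find $\delta' > 0$ with $\max_{s} \nabla f_s(y)^\top w \geq \delta'$ uniformly for $y \in \calM$ near $x^*$ and such $w$; we then take $\bar t = \delta'/(2K)$ and $\delta = \delta'/2$ (up to a strict-inequality adjustment).

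To get this uniform bound, first argue at $x^*$. The space $\ker(D\Phi(x^*))^\bot$ is spanned by the differences $\nabla f_s(x^*) - \nabla f_1(x^*)$, i.e. it is the direction space of $\aff(\partial f(x^*))$. Since $0 = \sum_s \lambda^*_s \nabla f_s(x^*)$ lies in the relative interior of $\partial f(x^*)$ by \ref{enum:B2_2} and \ref{enum:B2_3}, for a unit $w$ in this direction space we have $\max_s \nabla f_s(x^*)^\top w = \max_{\xi \in \partial f(x^*)} \xi^\top w > 0$. Otherwise $\xi^\top w \leq 0$ on $\partial f(x^*)$, and combined with $\sum_s \lambda^*_s \nabla f_s(x^*)^\top w = 0$ and $\lambda^* > 0$ this would force $\nabla f_s(x^*)^\top w = 0$ for all $s$, so $w \perp$ the direction space, contradicting $w \neq 0$ in it. Compactness of the unit sphere in that subspace gives a uniform $\delta'' > 0$. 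To pass to $y$ near $x^*$, argue by contradiction with sequences $y^j \to x^*$ in $\calM$ and unit $w^j \in \ker(D\Phi(y^j))^\bot$ with $\max_s \nabla f_s(y^j)^\top w^j \leq 1/j$. Since $D\Phi$ has full rank near $x^*$, the row spaces $\ker(D\Phi(y^j))^\bot$ converge to $\ker(D\Phi(x^*))^\bot$, so a subsequence $w^j \to w \in \ker(D\Phi(x^*))^\bot$ with $\|w\| = 1$ and $\max_s \nabla f_s(x^*)^\top w \leq 0$, which is a contradiction. (The relative-boundary description is an alternative route: the minimal value of $\|\cdot\|$ over $\rb(\partial f(x^*))$ gives an explicit $\delta''$.)

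For (b), reduce to (a) via the projection. Shrink $U$ so that for $x \in U$ every $\pr(x)$ lies in the neighborhood from (a) and $\|x - \pr(x)\| \leq \bar t$. This works because $\|x - \pr(x)\| \leq \|x - x^*\|$, so $\pr(x)$ is within $2\|x - x^*\|$ of $x^*$; we also need $\calM$ closed near $x^*$ so that the minimizer exists. Since $\pr(x)$ minimizes the distance to $\calM$, first-order optimality gives $x - \pr(x) \perp$ the tangent space, i.e. $x - \pr(x) \in \ker(D\Phi(\pr(x)))^\bot$; this uses \eqref{eq:tangent_space_via_curves} by differentiating $\|\varphi(t) - x\|^2$ at $t = 0$. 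Applying (a) with $y = \pr(x)$, $t = \|x - \pr(x)\|$, and $w = (x - \pr(x))/t$ then yields the claim.

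The main obstacle is the uniformity in (a) as $y$ varies, namely the convergence of the normal spaces $\ker(D\Phi(y))^\bot$ and the use of a single Taylor constant. Both should follow from the full rank of $D\Phi$ near $x^*$ and continuity of derivatives.
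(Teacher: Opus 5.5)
Your proposal is correct and follows essentially the paper's route. In (a) you use a first-order Taylor bound for each $f_s$ at $y \in \calM$ and a contradiction argument passing $w^j \to w \in \ker(D\Phi(x^*))^\bot$ via the full rank of $D\Phi$; in (b) you show $x - \pr(x) \in \ker(D\Phi(\pr(x)))^\bot$ and apply (a). The differences are cosmetic: you get $\max_s \nabla f_s(x^*)^\top w > 0$ directly from $\lambda^* > 0$ instead of via $\min_{\xi \in \rb(\partial f(x^*))} \|\xi\|$, and you obtain normality of $x - \pr(x)$ from the curve characterization \eqref{eq:tangent_space_via_curves} rather than from the KKT conditions under LICQ.
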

    \begin{proof}
        \textbf{(a) Part 1:} Let $w \in \ker(D \Phi(x^*))^\bot$ with $\| w \| = 1$. Since we have $\ker(D \Phi(x^*))^\bot = \vspan(\{ \nabla f_s(x^*) - \nabla f_1(x^*) : s \in S, s \neq 1 \})$, $w$ can be written as a linear combination of $\nabla f_s(x^*)$, $s \in S$, where the sum of coefficients is zero. As $\partial f(x^*) = \conv(\{ \nabla f_s(x^*) : s \in S \})$ and $0 \in \partial f(x^*) \subseteq \aff(\partial f(x^*))$, this means that $\gamma w \in \aff(\partial f(x^*))$ for all $\gamma \in \R$. Let 
        \begin{align*}
            \delta' := \min_{\xi \in \rb(\partial f(x^*))} \|\xi \|.
        \end{align*}
        Due to uniqueness of $\lambda^*$ in \ref{enum:B2_2}, we have $0 \notin \rb(\partial f(x^*))$, and as $\rb(\partial f(x^*))$ is closed, we have $\delta' > 0$. Since $\partial f(x^*)$ is bounded and $\| w \| = 1$, there is some $\gamma \geq \delta'$ such that $\gamma w \in \rb(\partial f(x^*))$. In particular, it holds
        \begin{align} \label{eq:proof_lem_quad_orth_lin_growth_1}
            \max_{\xi \in \partial f(x^*)} \xi^\top w \geq \gamma w^\top w = \gamma \| w \|^2 = \gamma \geq \delta'.
        \end{align}
        \textbf{Part 2:} Assume that (a) does not hold. Then there are sequences $(y^j)_j \subseteq \calM$, $(t_j)_j \subseteq \R^{>0}$, $(w^j)_j \subseteq \R^n$, and $(\delta_j)_j \subseteq \R^{>0}$ with $y^j \rightarrow x^*$, $t_j \rightarrow 0$, $\delta_j \rightarrow 0$, and $w^j \in \ker(D\Phi(y^j))^\bot$, $\| w^j \| = 1$ for all $j \in \N$, such that
        \begin{align*}
            \frac{f(y^j + t_j w^j) - f(y^j)}{t_j} \leq \delta_j \quad \forall j \in \N.
        \end{align*}
        Note that $A(y^j) = S$ implies $f(y^j) = f_s(y^j)$ for all $j \in \N, s \in S$, so
        \begin{align*}
            \frac{f_s(y^j + t_j w^j) - f_s(y^j)}{t_j} 
            \leq \frac{f(y^j + t_j w^j) - f(y^j)}{t_j}
            \leq \delta_j \quad \forall j \in \N, s \in S.
        \end{align*}
        As all $f_s$ are $\C^2$ and $y^j \rightarrow x^*$, first-order Taylor expansion of $f_s$ at $y^j$ yields
        \begin{align} \label{eq:proof_lem_quad_orth_lin_growth_2}
            \nabla f_s(y^j)^\top w^j + \frac{O(\| t_j w^j \|^2)}{t_j} \leq \delta_j \quad \forall j \in \N, s \in S.
        \end{align}
        Since $\| w^j \| = 1$ for all $j \in \N$, we can assume w.l.o.g.\ that $w^j \rightarrow w$ with $\| w \| = 1$. As all $\nabla f_s$, $s \in S$, are continuous and affinely independent at $x^*$ and $w^j \in \ker(D\Phi(y^j))^\bot$ for all $j \in \N$, we have $w \in \ker(D \Phi(x^*))^\bot$. Letting $j \rightarrow \infty$ in \eqref{eq:proof_lem_quad_orth_lin_growth_2} yields $\nabla f_s(x^*)^\top w \leq 0$ for all $s \in S$. This is a contradiction to \eqref{eq:proof_lem_quad_orth_lin_growth_1}, since $\xi \in \partial f(x^*)$ is a convex combination of $\nabla f_s(x^*)$, $s \in S$. \\
        \textbf{(b) Part 1:} Let $\alpha > 0$ and $x \in \Bcl_{\alpha}(x^*)$. Since $\calM \cap \Bcl_{3 \alpha}(x^*)$ is compact the optimal value of 
        \begin{align} \label{eq:proof_lem_quad_orth_lin_growth_3}
            \min_{y \in \calM \cap \Bcl_{3 \alpha}(x^*)} \| y - x \|
        \end{align}
        exists, and since $x^* \in \calM$, it is less or equal $\| x^* - x  \| \leq \alpha$. In particular, all minima of \eqref{eq:proof_lem_quad_orth_lin_growth_3} lie in $\Bcl_{\alpha}(x) \subseteq \Bcl_{2 \alpha}(x^*)$. This implies that the constraint $y \in \Bcl_{3 \alpha}(x^*)$ in \eqref{eq:proof_lem_quad_orth_lin_growth_3} is superfluous, so $\min_{y \in \calM} \| y - x \|$ has a solution for all $x \in \Bcl_\alpha(x^*)$. \\
        \textbf{Part 2:} Let $x \in \Bcl_\alpha(x^*)$ and $\pr(x) \in \argmin_{y \in \calM} \| y - x \|$. By definition, $\pr(x)$ is a solution of
        \begin{equation} \label{eq:proof_lem_quad_orth_lin_growth_4}
            \begin{aligned}
                \min_{y \in \R^n} \ & \| y - x \|^2 \\
                \text{s.t.} \ & f_s(y) - f_1(y) = 0 \quad \forall s \in S, s \neq 1.
            \end{aligned}
        \end{equation}
        By Part 1, $\pr(x)$ must lie in $\Bcl_{\alpha}(x) \cap \calM \subseteq \Bcl_{2\alpha}(x^*) \cap \calM$.
        By \ref{enum:B2_3} and continuity of $\nabla f_s$, $s \in S$, we can choose $\alpha$ small enough so that all $\nabla f_s(y)$, $s\in S$, are affinely independent for all $y \in \Bcl_{2\alpha}(x^*)$, which implies that the LICQ holds in \eqref{eq:proof_lem_quad_orth_lin_growth_4}. The first-order optimality conditions yield the existence of some $\lambda \in \R^{|S|}$ with
        \begin{align*}
            0 = 2(\pr(x) - x) + \sum_{s \in S, s \neq 1} \lambda_s (\nabla f_s(\pr(x)) - \nabla f_1(\pr(x))),
        \end{align*}
        which is equivalent to 
        \begin{align*}
            x = \pr(x) + \frac{1}{2} \sum_{s \in S, s \neq 1} \lambda_s (\nabla f_s(\pr(x)) - \nabla f_1(\pr(x))).
        \end{align*}
        Since $\ker(D \Phi(\pr(x)))^\bot = \vspan(\{ \nabla f_s(\pr(x)) - \nabla f_1(\pr(x)) : s \in S, s \neq 1 \})$, this shows that there is some $w' \in \ker(D \Phi(\pr(x)))^\bot$ such that $x = \pr(x) + w'$. Let $t := \| w'\| = \|{\pr}(x) - x \| \leq \alpha$ and $w := w'/\| w'\|$.\\
        \textbf{Part 3:} Let $\delta$ as in (a). Then we can choose $\alpha$ small enough so that
        \begin{align*}
            \frac{f(x) - f(\pr(x))}{\| x - \pr(x) \|}
            = \frac{f(\pr(x) + t w) - f(\pr(x))}{t}
            \stackrel{(a)}{>} \delta
        \end{align*}
        for all $x \in \Bcl_\alpha(x^*)$, which completes the proof (with $U$ being any open neighborhood of $x^*$ in $\Bcl_\alpha(x^*)$).
    \end{proof} 

    Having analyzed the growth of $f$ orthogonal to $\calM$, it remains to analyze the growth along $\calM$. To this end, note that due to the max-type structure of $f$, for any convex combination of selection functions $\sum_{s \in S} \lambda_s f_s$, we have $\sum_{s \in S} \lambda_s f_s(x) \leq f(x)$ for all $x \in \R^n$ and $\sum_{s \in S} \lambda_s f_s(x) = f(x)$ for all $x \in \calM$. In words, any convex combination of selection functions is a lower bounding function for $f$ that is identical to $f$ in $\calM$. Furthermore, for the Lagrange multiplier $\lambda^*$ from \ref{enum:B2_2}, $x^*$ is a critical point of $\sum_{s \in S} \lambda^*_s f_s$. Due to the quadratic growth of $f$, this allows us to show that the Hessian matrix of $\sum_{s \in S} \lambda^*_s f_s$ is positive definite along $\calM$: 
     
    \begin{lemma} \label{lem:quad_pos_definite}
        Assume that $f$ satisfies \ref{assum:B2}. Then
        \begin{align*}
            \frac{1}{2} v^\top \left( \sum_{s \in S} \lambda^*_s \nabla^2 f_{s}(x^*) \right) v \geq \beta
        \end{align*}
        for all $v \in \ker(D \Phi(x^*))$ with $\| v \| = 1$.
    \end{lemma}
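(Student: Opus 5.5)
The idea is to compare $f$ with the smooth lower bound $\psi := \sum_{s \in S} \lambda^*_s f_s$ along a curve in $\calM$ through $x^*$ with velocity $v$, and then use the quadratic growth of $f$ around $x^*$.

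First, fix $v \in \ker(D\Phi(x^*))$ with $\|v\| = 1$. By \eqref{eq:tangent_space_via_curves}, applied at $x = x^* \in \calM \cap U$, there is a $\C^1$ curve $\varphi : I \to \calM \cap U$ with $\varphi(0) = x^*$ and $\varphi'(0) = v$. A $\C^1$ curve is not quite enough for a second-order expansion. So I would choose $\varphi$ to be $\C^2$. This is possible because $\calM \cap U = \Phi^{-1}(0) \cap U$ and $\Phi$ is $\C^{q+1}$ with $q \ge 2$, so $\calM \cap U$ is a $\C^2$ embedded submanifold. Concretely, one can take $\varphi(t) = x^* + tv + h(t)$ with $h(t) \in \ker(D\Phi(x^*))^\bot$ obtained from the implicit function theorem. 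Then $h$ is $\C^2$, $h(0) = 0$ and $h'(0) = 0$, so $\varphi(t) = x^* + tv + O(t^2)$.

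Next, since $A(y) = S$ on $\calM$, we have $f(\varphi(t)) = f_s(\varphi(t)) = \psi(\varphi(t))$ for every $s$. Quadratic growth gives
\[
\psi(\varphi(t)) - \psi(x^*) = f(\varphi(t)) - f(x^*) \ge \beta \|\varphi(t) - x^*\|^2 .
\]
The right-hand side equals $\beta t^2 (1 + o(1))$ because $\|\varphi(t) - x^*\| = |t| + O(t^2)$.

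The left-hand side is expanded to second order via $g(t) := \psi(\varphi(t))$:
\[
g(t) = g(0) + t\,g'(0) + \tfrac{t^2}{2} g''(0) + o(t^2).
\]
Here $g'(0) = \nabla\psi(x^*)^\top v = 0$, since $\lambda^*$ is a Lagrange multiplier. Also
\[
g''(0) = v^\top \nabla^2 \psi(x^*)\, v + \nabla\psi(x^*)^\top \varphi''(0) = v^\top \nabla^2\psi(x^*)\, v,
\]
again using $\nabla\psi(x^*) = 0$. The curvature term $\varphi''(0)$ therefore drops out, which is the reason to expand $\psi$ rather than a single $f_s$. Dividing the inequality by $t^2$ and letting $t \to 0$ yields $\tfrac12 v^\top \nabla^2\psi(x^*)\, v \ge \beta$, which is the claim.

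The main obstacle is the regularity of the curve. Equation \eqref{eq:tangent_space_via_curves} only provides $\C^1$ curves, so I would justify $\C^2$ regularity directly via the implicit function theorem for $\Phi$. If that regularity argument turned out to be inconvenient, the fallback would be to use the explicit parametrization $x^* + tv + h(tv)$ with $h(0) = 0$ and $Dh(0) = 0$. In that form $h(tv) = O(t^2)$, and the first-order term $\nabla\psi(x^*)^\top h$ vanishes identically, so only a second-order Taylor expansion of $\psi$ is needed.
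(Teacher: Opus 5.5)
Your proof is correct and follows essentially the same route as the paper. Both use a curve $\varphi$ in $\calM$ with $\varphi(0)=x^*$ and $\varphi'(0)=v$, the identity $f=\psi_*$ on $\calM$, $\nabla\psi_*(x^*)=0$, quadratic growth, and the limit $t\to 0$.

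The one difference is where the second-order expansion is done. The paper Taylor-expands $\psi_*$ in $\R^n$ at $x^*$ and then uses $(\varphi(t)-x^*)/\|\varphi(t)-x^*\|\to v$, so the $\C^1$ curve from \eqref{eq:tangent_space_via_curves} already suffices. Your $\C^2$ regularity argument via the implicit function theorem is valid but not needed; the paper's version is essentially your fallback.
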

    \begin{proof}
        Let $U$ be small enough so that $D \Phi(x)$ has full rank for all $x \in U$. Let $v \in \ker(D \Phi(x^*))$ with $\| v \| = 1$. Let $\varphi : I \rightarrow \calM \cap U$ be a curve corresponding to $v$ as in \eqref{eq:tangent_space_via_curves}, i.e., $\varphi(0) = x^*$ and $\varphi'(0) = v$. Let $\lambda^* \in \Omega_S$ as in \ref{enum:B2_2}. Since $A(x^*) = S$ and all $f_s$, $s \in S$, are $\C^3$, second-order Taylor expansion of $\psi_* := \sum_{s \in S} \lambda^*_s f_s$ at $x^*$ yields
        \begin{align*}
            &f(\varphi(t)) - f(x^*)
            = \psi_*(\varphi(t)) - \psi_*(x^*) \\
            &= \frac{1}{2} (\varphi(t) - x^*)^\top \nabla ^2 \psi_*(x^*) (\varphi(t) - x^*) + O(\| \varphi(t) - x^* \|^3)
        \end{align*}
        for all $t \in I$. Quadratic growth of $f$ around $x^*$ leads to
        \begin{align*}
            \beta
            &\leq \frac{f(\varphi(t)) - f(x^*)}{\| \varphi(t) - x^* \|^2} \\
            &= \frac{1}{2} \left( \frac{\varphi(t) - x^*}{\| \varphi(t) - x^* \|} \right)^\top \left( \sum_{s \in S} \lambda^*_s \nabla^2 f_{s}(x^*) \right) \frac{\varphi(t) - x^*}{\| \varphi(t) - x^* \|} + \frac{O(\| \varphi(t) - x^* \|^3)}{\| \varphi(t) - x^* \|^2}
        \end{align*}
        for all $t \in I$. Letting $t \rightarrow 0$ completes the proof, since
        \begin{align*}
            \frac{\varphi(t) - x^*}{\| \varphi(t) - x^* \|}
            = \left\| \frac{\varphi(t) - \varphi(0)}{t} \right\|^{-1} \frac{\varphi(t) - \varphi(0)}{t} 
            \xrightarrow[]{t \rightarrow 0} \| \varphi'(0) \|^{-1} \varphi'(0) 
            = v.
        \end{align*}
    \end{proof}
    
    The following lemma is a technical result on sequences of differences of two points in $\calM$, which will be required for applying Lem.\ \ref{lem:quad_pos_definite} in our context:
     
    \begin{lemma} \label{lem:quad_tangent_sequence}
        Assume that $f$ satisfies \ref{assum:B2}. Let $(x^j)_j$ and $(y^j)_j$ be sequences in $\calM$ with limit $x^*$. Then every accumulation point of $((x^j - y^j)/\| x^j - y^j \|)_j$ lies in $\ker(D\Phi(x^*))$.
    \end{lemma}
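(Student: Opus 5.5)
Since $x^j, y^j \in \calM$, we have $\Phi(x^j) = \Phi(y^j) = 0$. I will use a first-order Taylor expansion (mean value argument) of $\Phi$ between $y^j$ and $x^j$. Then I will pass to the limit along a subsequence on which the normalized differences converge.

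Fix an accumulation point $d$ of $d^j := (x^j - y^j)/\| x^j - y^j \|$. This implicitly assumes $x^j \neq y^j$; otherwise I restrict to the indices where this holds. Pass to a subsequence with $d^j \rightarrow d$; then $\| d \| = 1$. For each component $\Phi_k = f_{k+1} - f_1$, which is $\C^2$ (even $\C^{q+1}$ with $q \geq 2$) by \ref{assum:B1}, the mean value theorem gives some $a^{j,k} \in \conv(\{x^j, y^j\})$ with
\begin{align*}
    0 = \Phi_k(x^j) - \Phi_k(y^j) = \nabla \Phi_k(a^{j,k})^\top (x^j - y^j).
\end{align*}
Equivalently, I can use the Taylor expansion with remainder $O(\| x^j - y^j \|^2)$, with a constant that is uniform on a bounded convex neighborhood of $x^*$. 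Dividing by $\| x^j - y^j \|$ yields $\nabla \Phi_k(a^{j,k})^\top d^j = 0$; in the Taylor version it is $\nabla \Phi_k(y^j)^\top d^j = O(\| x^j - y^j \|)$ instead.

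Since $x^j, y^j \rightarrow x^*$, also $a^{j,k} \rightarrow x^*$. Continuity of $\nabla f_s$ then lets me take $j \rightarrow \infty$ and obtain $\nabla \Phi_k(x^*)^\top d = 0$ for all $k \in \{1, \dots, |S|-1\}$. In matrix form this is $D\Phi(x^*) d = 0$, i.e., $d \in \ker(D\Phi(x^*))$.

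There is no serious obstacle. The only care needed is to handle the componentwise mean value points (a vector-valued mean value theorem does not exist) and the case $x^j = y^j$. Notably, the full-rank property from \ref{enum:B2_3} is not even needed for this argument.
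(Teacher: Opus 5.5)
Your proposal is correct and is essentially the paper's proof: the paper also applies the mean value theorem to each difference $f_s - f_1$ between $y^j$ and $x^j$, divides by $\|x^j - y^j\|$, and lets the intermediate points $a^j \to x^*$. Your remarks about restricting to indices with $x^j \neq y^j$ and about \ref{enum:B2_3} not being needed are accurate; the paper leaves both points implicit.
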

    \begin{proof}
        For $s \in S$, $s \neq 1$, denote $h(x) := f_s(x) - f_1(x)$. By the mean-value theorem, for every $j \in \N$, there is some $a^j \in \conv(\{ x^j, y^j \})$ with
        \begin{align*}
            0
            = \frac{h(x^j) - h(y^j)}{\| x^j - y^j \|}
            = \frac{\nabla h(a^j)^\top (x^j - y^j)}{\| x^j - y^j \|}
            = (\nabla f_s(a^j) - \nabla f_1(a^j))^\top \frac{x^j - y^j}{\| x^j - y^j \|}.
        \end{align*}
        Since $x^j \rightarrow x^*$ and $y^j \rightarrow x^*$ it also holds $a^j \rightarrow x^*$, so letting $j \rightarrow \infty$ completes the proof.
    \end{proof}

    Finally, to be able to use the formula for $z^*(x,\Delta) - x$ from Lem.\ \ref{lem:z_star_formula}, we have to show that there are no other critical points of $f$ close to $x^*$, which we do in the following lemma:
     
    \begin{lemma} \label{lem:quad_isolated_critical_point}
        Assume that $f$ satisfies \ref{assum:B2}. Then there is an open neighborhood $U \subseteq \R^n$ of $x^*$ such that $x^*$ is the only critical point of $f$ in $U$. 
    \end{lemma}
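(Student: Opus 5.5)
We argue by contradiction and split into the cases of critical points off $\calM$ and on $\calM$. Suppose there is a sequence $(x^j)_j$ of critical points of $f$ with $x^j \neq x^*$ and $x^j \rightarrow x^*$. By \ref{enum:B2_1}, after passing to a subsequence we may assume $A(x^j) = S'$ is constant, and by continuity of the $f_s$ we have $S' \subseteq A(x^*) = S$. Let $\mu^j \in \Omega_{S'}$ be such that $\sum_{s \in S'} \mu^j_s \nabla f_s(x^j) = 0$. By compactness we may assume $\mu^j \rightarrow \mu \in \Omega_{S'}$, and letting $j \rightarrow \infty$ gives $\sum_{s \in S'} \mu_s \nabla f_s(x^*) = 0$. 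Extending $\mu$ by zero to $S$, we obtain a Lagrange multiplier at $x^*$. Uniqueness of $\lambda^*$ (from the affine independence in \ref{enum:B2_3}) then forces $\mu = \lambda^*$, which has full support. Hence $S' = S$, i.e.\ $x^j \in \calM$ for all $j$ large. This rules out critical points close to $x^*$ outside of $\calM$.

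On $\calM$ we use second-order information. Since the gradients $\nabla f_s(x^j)$, $s \in S$, are affinely independent for large $j$, the multiplier $\mu^j$ is unique, and $\mu^j \rightarrow \lambda^*$. Set $\psi_j := \sum_{s \in S} \mu^j_s f_s$ and $\psi_* := \sum_{s\in S}\lambda^*_s f_s$. Because $x^j, x^* \in \calM$, we have $\psi_j(x^j) = f(x^j)$ and $\psi_j(x^*) = f(x^*)$, and also $\nabla \psi_j(x^j) = 0$. A second-order Taylor expansion of $\psi_j$ at $x^j$ gives
\begin{align*}
    f(x^*) - f(x^j) = \tfrac12 (x^* - x^j)^\top \nabla^2 \psi_j(x^j)(x^* - x^j) + O(\|x^* - x^j\|^3),
\end{align*}
with a uniform $O$-constant, since $\mu^j$ is bounded and all $f_s$ are $\C^3$. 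Let $d^j := (x^j - x^*)/\|x^j - x^*\|$. By Lem.\ \ref{lem:quad_tangent_sequence} (applied with $y^j = x^*$), every accumulation point $v$ of $(d^j)_j$ lies in $\ker(D\Phi(x^*))$ and satisfies $\|v\| = 1$. Moreover $\nabla^2 \psi_j(x^j) \rightarrow \nabla^2 \psi_*(x^*)$. Dividing by $\|x^j - x^*\|^2$ and applying Lem.\ \ref{lem:quad_pos_definite}, the right-hand side is bounded below by something tending to at least $\beta > 0$. So $f(x^*) > f(x^j)$ for large $j$, which contradicts the quadratic growth $f(x^j) \geq f(x^*) + \beta\|x^j - x^*\|^2$.

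The main obstacle is the first step, showing $S' = S$. It relies on the limit of the multipliers being a Lagrange multiplier at $x^*$, combined with uniqueness and strict positivity from \ref{enum:B2_2}, and this is where the strong assumptions enter. The second step is routine once the constancy of the active set and the tangency of the limit direction (via Lem.\ \ref{lem:quad_tangent_sequence}) are in place.
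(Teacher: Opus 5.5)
Your proof is correct and follows essentially the same route as the paper. You first fix the active set, pass to a limit of the multipliers, and use uniqueness and full support of $\lambda^*$ to force $x^j \in \calM$. You then Taylor-expand $\psi_j$ to second order at $x^j$, use Lem.\ \ref{lem:quad_tangent_sequence} with $y^j = x^*$ together with Lem.\ \ref{lem:quad_pos_definite} to bound the curvature term below, and obtain a contradiction with quadratic growth.
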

    \begin{proof}
        \textbf{Part 1:} If there would be no such $U$, then there would be a sequence $(x^j)_j \subseteq \R^n \setminus \{ x^* \}$ of critical points with $x^j \rightarrow x^*$. Since $S$ is finite, we can assume w.l.o.g.\ that $A(x^j) = S' \subseteq S$ is constant for all $j \in \N$. This implies that there is a sequence of Lagrange multipliers $(\lambda^j)_j \subseteq \Omega_{S'}$ with $\sum_{s \in S'} \lambda_s^j \nabla f_s(x^j) = 0$ for all $j \in \N$. Since $\Omega_{S'}$ is compact, $(\lambda^j)_j$ must have an accumulation point in $\Omega_{S'}$. By continuity of $\nabla f_s$, $s \in S$, and since $A(x^*) = S$, this accumulation point (embedded into $\Omega_S$ by appending zeros) must be a Lagrange multiplier of $f$ at $x^*$. By uniqueness of $\lambda^*$ (cf.\ \ref{enum:B2_2} and \ref{enum:B2_3}), this accumulation point must be $\lambda^*$. Since $\lambda_s^* > 0$ for all $s \in S$, it follows that $S' = S$, so $A(x^j) = S$ for all $j \in \N$. \\
        \textbf{Part 2:} By Lem.\ \ref{lem:quad_pos_definite}, Lem.\ \ref{lem:quad_tangent_sequence} and since all $\nabla^2 f_s$, $s \in S$, are continuous, we can assume w.l.o.g.\ that 
        \begin{align*}
            \frac{1}{2} \left( \frac{x^j - x^*}{\| x^j - x^* \|} \right)^\top 
            \left( \sum_{s \in S} \lambda^j_s \nabla^2 f_{s}(x^j) \right)
            \frac{x^j - x^*}{\| x^j - x^* \|}
            \geq \frac{\beta}{2} 
            \quad \forall j \in \N.
        \end{align*}
        \textbf{Part 3:} Second-order Taylor expansion of $\psi_j := \sum_{s \in S} \lambda_s^j f_s$ at $x^j$ with remainder $R_j$ 
        yields
        \begin{align*}
            f(x^*)
            = \psi_j(x^*)
            = f(x^j) + \frac{1}{2} (x^* - x^j)^\top \nabla^2 \psi_j(x^j)(x^* - x^j) + R_j,
        \end{align*}
        so quadratic growth of $f$ around $x^*$ implies
        \begin{align*}
            \beta
            &\leq \frac{f(x^j) - f(x^*)}{\| x^j - x^* \|^2}
            = - \frac{1}{2} \left( \frac{x^* - x^j}{\| x^j - x^* \|} \right)^\top
            \nabla^2 \psi_j(x^j)
            \frac{x^* - x^j}{\| x^j - x^* \|} - \frac{R_j}{\| x^j - x^* \|^2} \\
            &\leq -\frac{\beta}{2} - \frac{R_j}{\| x^j - x^* \|^2}
        \end{align*}
        for all $j \in \N$. By the remainder formula \eqref{eq:Taylor_remainder} for $R_j$, since all $f_s$, $s \in S$, are $\C^3$, and since a convex combination of these selection functions was expanded, the second summand on the right-hand side in the above inequality vanishes for $j \rightarrow \infty$. Thus, taking the limit leads to a contradiction.
    \end{proof}

    Combination of all results in this subsection leads to the following theorem:
    \begin{theorem} \label{thm:quad_decrease_property}
        Assume that $f$ satisfies \ref{assum:B2}. Then \eqref{eq:property_P} holds for $p = 2$.
    \end{theorem}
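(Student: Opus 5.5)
I will argue by contradiction, mirroring Thm.\ \ref{thm:sharp_decrease_property}. If \eqref{eq:property_P} fails for $p=2$, there are sequences $x^j \to x^*$ and $\Delta_j > 0$ with $x^* \notin \Bcl_{\Delta_j}(x^j)$ and $\Lambda^2(x^j,\Delta_j) \to 0$. Since $\|x^j - x^*\| > \Delta_j$, we get $\Delta_j \to 0$, and with $z^j := z^*(x^j,\Delta_j)$ also $z^j \to x^*$. I first rule out $z^j = x^*$: otherwise, by quadratic growth, $\Lambda^2 \geq \beta\|x^j-x^*\|^2/\Delta_j^2 > \beta$. So $z^j \neq x^*$, and by Lem.\ \ref{lem:quad_isolated_critical_point} $z^j$ is not critical. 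Lem.\ \ref{lem:z_star_formula} then gives $\|z^j - x^j\| = \Delta_j$ and $\lambda^j \in \Omega_{S'}$ (passing to a subsequence with $A(z^j) = S'$ constant) such that $x^j - z^j = \Delta_j g^j/\|g^j\|$ with $g^j = \nabla\psi_j(z^j)$ and $\psi_j = \sum_{s\in S'}\lambda^j_s f_s$.

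Second-order Taylor expansion of $\psi_j$ at $z^j$, using $f(x^j) \geq \psi_j(x^j)$ and $f(z^j) = \psi_j(z^j)$, gives
\begin{align*}
\Lambda^2(x^j,\Delta_j) \geq \frac{\|g^j\|}{\Delta_j} + \frac{1}{2} d_j^\top \nabla^2\psi_j(z^j) d_j + O(\Delta_j),
\end{align*}
where $d_j := (x^j - z^j)/\Delta_j = g^j/\|g^j\|$. If $\|g^j\|/\Delta_j$ is unbounded along a subsequence, this bound already contradicts $\Lambda^2 \to 0$, because the Hessian term is bounded. So I may assume $\|g^j\| \leq M\Delta_j \to 0$. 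A subsequence of $\lambda^j$ then converges to a Lagrange multiplier at $x^*$, which by uniqueness is $\lambda^*$. Positivity of $\lambda^*$ forces $S' = S$, so $z^j \in \calM$ and $\psi_j \to \psi_* = \sum_s\lambda^*_s f_s$ in $\C^2$ near $x^*$.

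It remains to show that the Hessian term is eventually $\geq$ a positive constant, or otherwise find a contradiction. Pass to a subsequence with $d_j \to d$. I decompose $d = d_T + d_N$ with $d_T \in \ker(D\Phi(x^*))$ and $d_N$ in its orthogonal complement. The tangential part is handled by Lem.\ \ref{lem:quad_pos_definite}, and the normal part must be excluded or shown to produce first-order decrease.

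\textbf{Main obstacle.} The key step is to show that $d_N = 0$, i.e.\ that $x^j$ is close to $\calM$ relative to $\Delta_j$. Here I use Lem.\ \ref{lem:quad_orth_lin_growth}(b). Let $y^j := \pr(x^j)$. Then
\begin{align*}
f(x^j) - f(y^j) > \delta\|x^j - y^j\|.
\end{align*}
If $\|x^j - y^j\|$ is comparable to $\Delta_j$, I compare $y^j$ with $z^j$ via a point of $\calM$ in the trust region. Moving from $x^j$ toward $y^j$ by a distance $\min(\Delta_j,\|x^j-y^j\|)$ decreases $f$ linearly, by roughly $\delta$ times that distance, up to errors of order $\Delta_j^2$. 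This uses that $f$ grows sharply off $\calM$ (Lem.\ \ref{lem:quad_orth_lin_growth}(a)) and is smooth along $\calM$. The resulting decrease is of order $\Delta_j \gg \Delta_j^2$, contradicting $\Lambda^2 \to 0$. Hence $\|x^j - y^j\| = o(\Delta_j)$.

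With this, $(x^j - z^j)/\Delta_j$ differs by $o(1)$ from $(y^j - z^j)/\Delta_j$, a difference of two points of $\calM$ with norm $\to 1$. Lem.\ \ref{lem:quad_tangent_sequence} then gives $d \in \ker(D\Phi(x^*))$ with $\|d\| = 1$. Finally, Lem.\ \ref{lem:quad_pos_definite} together with $\nabla^2\psi_j(z^j) \to \nabla^2\psi_*(x^*)$ yields $\liminf_j \Lambda^2(x^j,\Delta_j) \geq \beta > 0$, which is the desired contradiction.
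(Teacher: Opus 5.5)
Your proof is correct and follows the paper's argument essentially step for step: the contradiction sequence, Lem.~\ref{lem:z_star_formula}, the second-order expansion that forces $\lambda^j\to\lambda^*$ and $z^j\in\calM$, Lem.~\ref{lem:quad_orth_lin_growth}(b) to show $\|x^j-\pr(x^j)\|=o(\Delta_j)$, and then Lem.~\ref{lem:quad_tangent_sequence} and Lem.~\ref{lem:quad_pos_definite}. Your ``main obstacle'' is easier than you make it: since $z^j\in\calM$ and $\|z^j-x^j\|=\Delta_j$, the projection $y^j=\pr(x^j)$ automatically lies in $\Bcl_{\Delta_j}(x^j)$, so $f(z^j)\le f(y^j)$, and Lem.~\ref{lem:quad_orth_lin_growth}(b) gives $\Lambda^2(x^j,\Delta_j)\ge\delta\|x^j-y^j\|/\Delta_j^2$ directly, with no error terms, no appeal to part (a), and no need for the case $\|x^j-y^j\|>\Delta_j$, which cannot occur.
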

    \begin{proof}
        \textbf{Part 1:} Repeat the constructions of Part 1 in the proof of Thm.\ \ref{thm:sharp_decrease_property}, replacing \eqref{eq:proof_lem_sharp_decrease_property_1} with
        \begin{align} \label{eq:proof_lem_quad_decrease_property_1}
            0
            = \lim_{j \rightarrow \infty} \Lambda^2(x^j,\Delta_j)
            = \lim_{j \rightarrow \infty} \frac{f(x^j) - f(z^j)}{\Delta_j^2}.
        \end{align}
        Then repeat Part 2 in the proof of Thm.\ \ref{thm:sharp_decrease_property}, using Lem.\ \ref{lem:quad_isolated_critical_point} instead of Lem.\ \ref{lem:sharp_subgradient_bound}. \\
        \textbf{Part 2:} Second-order Taylor expansion of $\psi_j := \sum_{s \in S'} \lambda_s^j f_s$ at $z^j$ with remainder $R_j$, combined with Lem.\ \ref{lem:z_star_formula}, yields
        \begin{align*}
            &f(x^j) - f(z^j)
            \geq \psi_j(x^j) - \psi_j(z^j) \\
            &= \nabla \psi_j(z^j)^\top (x^j - z^j)
            + \frac{1}{2} (x^j - z^j)^\top \nabla^2 \psi_j(z^j) (x^j - z^j)
            + R_j \\
            &= \Delta_j \left\| \nabla \psi_j(z^j) \right\|
            + \frac{1}{2} (x^j - z^j)^\top \nabla^2 \psi_j(z^j) (x^j - z^j)
            + R_j,
        \end{align*}
        so
        \begin{equation} \label{eq:proof_lem_quad_decrease_property_3}
            \begin{aligned}
                & \Lambda^2(x^j,\Delta_j)
                = \frac{f(x^j) - f(z^j)}{\Delta_j^2} \\
                &\geq \underbrace{\frac{\left\| \sum_{s \in S'} \lambda_s^j \nabla f_s(z^j) \right\|}{\Delta_j}}_{\text{(a)}}
                + \underbrace{\frac{1}{2} \left( \tfrac{x^j - z^j}{\Delta_j} \right)^\top \left( \sum_{s \in S'} \lambda_s^j \nabla^2 f_s(z^j) \right) \tfrac{x^j - z^j}{\Delta_j}}_{\text{(b)}}
                + \underbrace{\frac{R_j}{\Delta_j^2}}_{\text{(c)}}
            \end{aligned}
        \end{equation}
        for all $j \in \N$. By the remainder formula \eqref{eq:Taylor_remainder}, the term (c) vanishes for $j \rightarrow \infty$. Since all $\nabla^2 f_s$, $s \in S$, are continuous and $\| x^j - z^j \| = \Delta_j$ for all $j \in \N$, the term (b) is bounded below for $j \rightarrow \infty$. Due to \eqref{eq:proof_lem_quad_decrease_property_1}, this means that the term (a) must be bounded above. In particular, the numerator in (a) vanishes for $j \rightarrow \infty$. Analogous to Part 1 in the proof of Lem.\ \ref{lem:quad_isolated_critical_point}, we can assume w.l.o.g.\ that $(\lambda^j)_j$ converges to $\lambda^*$ (cf.\ \ref{enum:B2_2}), which implies that $S' = A(z^j) = S$, i.e., $z^j \in \calM$, for all $j \in \N$. \\
        \textbf{Part 4:} With $\pr(x^j) \in \argmin_{y \in \calM} \| y - x^j \|$ as in Lem.\ \ref{lem:quad_orth_lin_growth}(b), we can write 
        \begin{equation} \label{eq:proof_lem_quad_decrease_property_2}
            \begin{aligned}
                \frac{f(x^j) - f(z^j)}{\Delta_j^2}
                &= \frac{f(x^j) - f(\pr(x^j))}{\Delta_j^2} + \frac{f(\pr(x^j)) - f(z^j)}{\Delta_j^2} \\
                &= \frac{f(x^j) - f(\pr(x^j))}{\| x^j - \pr(x^j) \|} \frac{\| x^j - \pr(x^j) \|}{\Delta_j^2} + \frac{f(\pr(x^j)) - f(z^j)}{\Delta_j^2}
            \end{aligned}
        \end{equation}
        for all $j \in \N$. Since $z^j \in \calM$ and $\| z^j - x^j \| = \Delta_j$, we have $\pr(x^j) \in \Bcl_{\Delta_j}(x^j)$. In particular, by definition of $z^j = z^*(x^j,\Delta_j)$ (cf.\ \eqref{eq:def_z_star_Lambda}), it holds $f(\pr(x^j)) \geq f(z^j)$, so the second summand on the right-hand side of \eqref{eq:proof_lem_quad_decrease_property_2} must be non-negative. By Lem.\ \ref{lem:quad_orth_lin_growth}(b), the first factor in the first summand is bounded below by a positive constant for all $j \in \N$ large enough. Combined with \eqref{eq:proof_lem_quad_decrease_property_1}, this means that the second factor must vanish for $j \rightarrow \infty$. \\
        \textbf{Part 5:} Consider the equality
        \begin{align*}
            \frac{x^j - z^j}{\Delta_j}
            = \frac{x^j - \pr(x^j)}{\Delta_j} + \frac{\pr(x^j) - z^j}{\Delta_j}.
        \end{align*}
        By Part 4, the first summand vanishes. By Lem.\ \ref{lem:quad_tangent_sequence}, we can assume w.l.o.g.\ that the second summand, and therefore the left-hand side, converges to an element of $\ker(D\Phi(x^*))$ with norm $1$. \\
        \textbf{Part 6:} By Lem.\ \ref{lem:quad_pos_definite} and Part 5, we can assume w.l.o.g.\ that the limit inferior of the term (b) for $j \rightarrow \infty$ in \eqref{eq:proof_lem_quad_decrease_property_3} is positive. Since the term (a) is non-negative and the term (c) vanishes for $j \rightarrow \infty$, this contradicts \eqref{eq:proof_lem_quad_decrease_property_1}, completing the proof.
    \end{proof}

\section{Numerical experiments} \label{sec:numerical_experiments}

In this section, we begin by numerically verifying the ability of \algGlobal{} to compute trust regions $\Bcl_{\Delta_j}(x^j)$ containing the minimum $x^*$ for $j$ large enough. Afterwards, we demonstrate its use for globalizing the local R-superlinear method from \cite{GU2026a} (Alg.\ 4.2) and the $k$-bundle Newton method from \cite{LW2019} (Alg.\ 2.2). Code for the reproduction of all experiments shown in this section is available at \url{https://github.com/b-gebken/higher-order-trust-region-bundle-method}. 

In all experiments, we assume that $f$ is sufficiently smooth at every point where Oracle \ref{oracle:1} is called, and use the exact analytic formulas of $f$ for the derivatives. For the parameters of \algGlobal{}, we always use
\begin{align*}
    \Delta_j = 10^{-j+1}, \quad \tau_j = 10^{-5}, \quad \sigma = 0.5, \quad c = 0.1
\end{align*}
and stop the algorithm after the outer iteration with $j = 5$ finished (i.e., the final point that is computed is $x^{5,N_5} = x^{6,0}$).
While the convergence theory requires $(\tau_j)_j$ to vanish, we found that using a ``small'' constant $\tau_j$ improved the performance. (The impact of the choice of $(\tau_j)_j$ will be discussed in Sec.\ \ref{sec:discussion_and_outlook}.)
For $q = 2$, the subproblem \eqref{eq:bar_z_epigraph} is solved via \ipopt{} \cite{WB2005} (using the Matlab interface \mexipopt{}\footnote{\url{https://github.com/ebertolazzi/mexIPOPT} (Retrieved Mar.\ 24, 2026)}). For $q = 1$, we use the maximum norm as the trust-region norm in \eqref{eq:bar_z_epigraph} (and omit the exponent $2$ in the constraint), which turns \eqref{eq:bar_z_epigraph} into a linear problem that we can solve via Matlab's \verb|linprog|. For the initial $W^1$ in Step \ref{state:global_method_approx_W} of \algGlobal{}, instead of using $W^1 = \{ x^{j,i} \}$, we keep a memory of the $100$ most recent points at which the oracle was evaluated and set $W^1$ to $\{ x^{j,i} \}$ united with those memorized points that also lie in the current trust region $\Bcl_{\Delta_j}(x^{j,i})$. (Reusing information in this way does not change the convergence analysis, since Lem.\ \ref{lem:algo_approx_W_termination} holds for any initialization of $W^1$ in \algApproxW{}.)

\subsection{Trust regions containing the minimum}

We first consider the behavior of \algGlobal{} for the well-behaved, strongly convex function (8.4) from \cite{LW2019}:
 
\begin{example} \label{example:LW2019_84_global}
    For $n, m \in \N$ and $I = \{1,\dots,m\}$, consider the strongly convex function
    \begin{align*}
        f : \R^n \rightarrow \R,
        \quad
        x \mapsto 
        \max_{i \in I}\left(  g_i^\top x + \frac{1}{2} x^\top H_i x + \frac{c_i}{24} \| x \|^4 \right)
    \end{align*}
    from \cite{LW2019}, where $c_i > 0$ for all $i \in I$, $H_i \in \R^{n \times n}$ is symmetric, pos.\ definite for all $i \in I$, and there is a subset $I' \subseteq I$ of size $\min(n+1,m)$ such that the vectors $g_i \in \R^n$, $i \in I'$, are affinely independent with $\sum_{i \in I'} \lambda_i g_i = 0$ for some strictly positive $\lambda \in \Omega_{I'}$. 
    Clearly $f$ satisfies \ref{assum:B1} with $|S| = m$. The global minimum is $x^* = 0 \in \R^n$. 
    If $n \geq m$, then the growth of $f$ around $x^*$ is quadratic ($p = 2$), and if $n < m$, then the growth is sharp ($p = 1$).
    We generate two random instances of this function, one with $n = 50$, $m = 100$, and one with $n = 50$, $m = 40$.
    By construction, these instances satisfy the assumptions of Thm.\ \ref{thm:sharp_decrease_property} and Thm.\ \ref{thm:quad_decrease_property}, respectively.
    To each instance we apply \algGlobal{} with the initial point $x^{1,0} = (1,\dots,1)^\top \in \R^n$. For the first instance we set $q = p = 1$ and for the second instance we set $q = p = 2$. The results are shown in Fig.\ \ref{fig:Example_LW2019_84_global}.
    \begin{figure}
        \centering
        \parbox[b]{0.49\textwidth}{
            \centering 
            \includegraphics[width=0.30\textwidth]{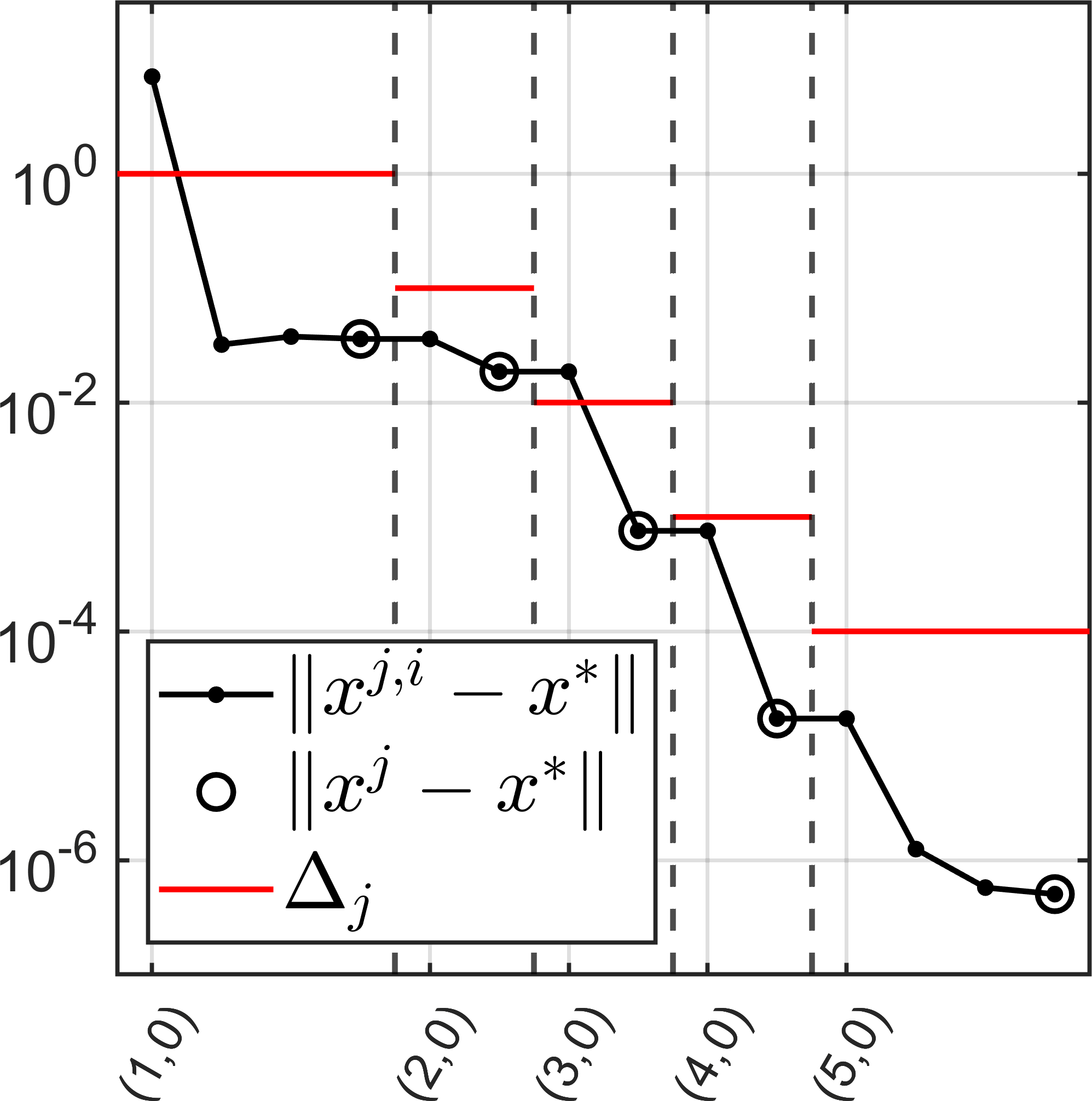}\\
        }
        \parbox[b]{0.49\textwidth}{
            \centering 
            \includegraphics[width=0.30\textwidth]{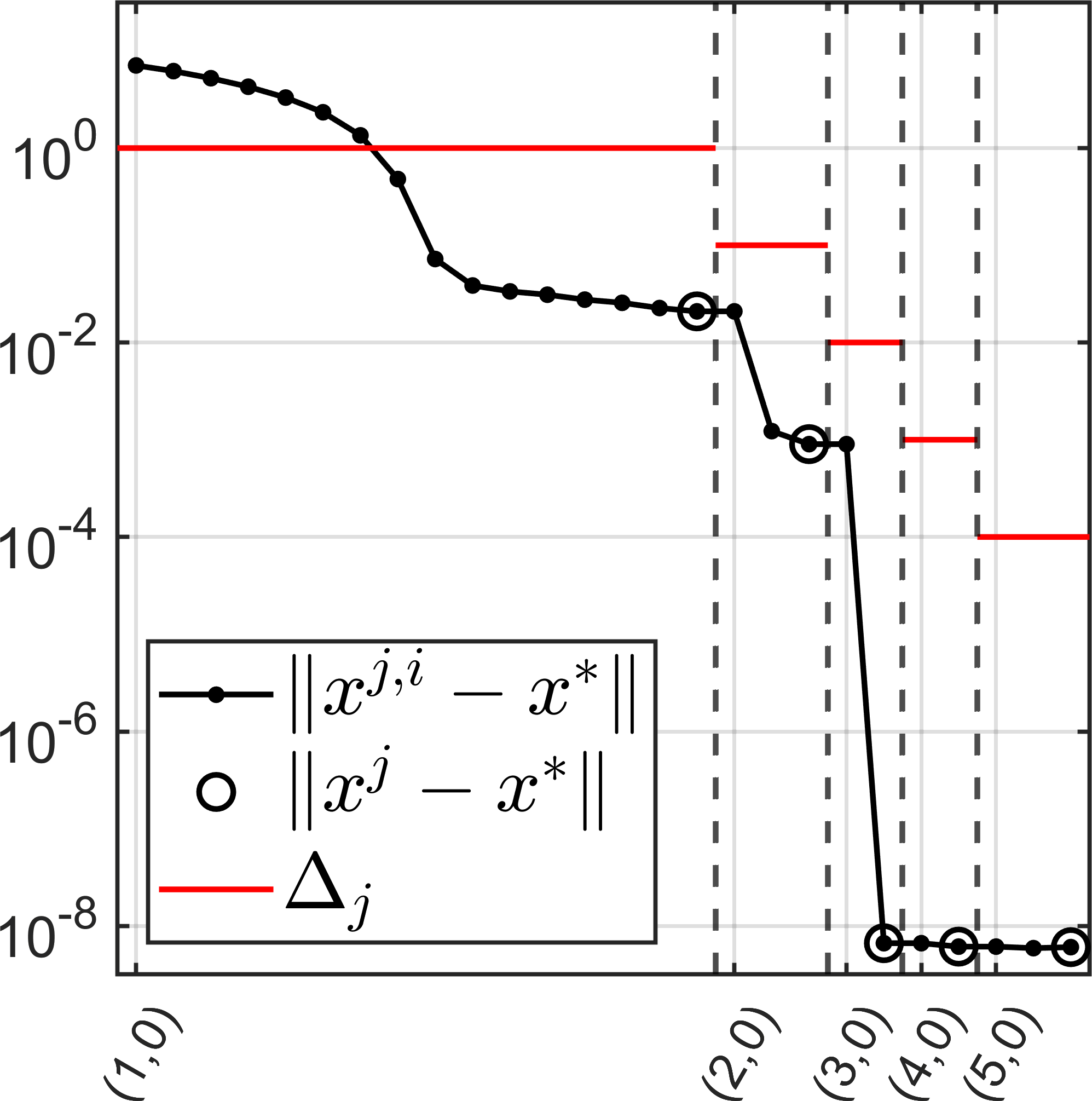}\\
        }
        \caption{The distance $\| x^{j,i} - x^* \|$ in Ex.\ \ref{example:LW2019_84_global} for $n = 50$, $m = 100$ (left) and $n = 50$, $m = 40$ (right). The horizontal axis enumerates the indices $(j,i)$ of the iterates $x^{j,i}$ in the order they are encountered in \algGlobal{}, i.e., $(1,0), (1,1), \dots, (1,N_1), (2,0), (2,1), \dots$, with the iterates corresponding to $(x^j)_j$ (cf.\ \algGlobal{}, Step \ref{state:global_method_change_j}) marked as circles. The dashed, vertical lines indicate changes of $j$ (from $(j,N_j)$ to $(j+1,0)$), and the horizontal, red lines show the trust-region radius for the corresponding $j$. (This means that visually, $x^* \in \Bcl_{\Delta_j}(x^j)$ is equivalent to $\| x^j - x^* \|$ lying below the corresponding red horizontal line.)}
        \label{fig:Example_LW2019_84_global}
    \end{figure}
    We see that for both instances, $x^* \in \Bcl_{\Delta_j}(x^j)$ holds for all $j \in \{1,\dots,5\}$.
\end{example}

For the second example we consider the nonconvex function (8.5) from \cite{LW2019}:

\begin{example} \label{example:LW2019_85_global}
    For $n, m \in \N$ and $I = \{1,\dots,m\}$, consider the nonconvex function
    \begin{align*}
        f : \R^n \rightarrow \R,
        \quad
        x \mapsto \sum_{i \in I} \left| g_i^\top x + \frac{1}{2} x^\top H_i x + \frac{c_i}{24} \| x \|^4 \right|
    \end{align*}
    from \cite{LW2019}, with $c_i$, $H_i$ and $g_i$ for $i \in I$ as in Ex.\ \ref{example:LW2019_84_global}. Then $f$ satisfies \ref{assum:B1} with $|S| = 2^m$. The global minimum is $x^* = 0 \in \R^n$. The order of growth of $f$ depends on $n$ and $m$ and is the same as in Ex.\ \ref{example:LW2019_84_global}. We again generate two random instances of this function, one with $n = 25$, $m = 100$, and one with $n = 50$, $m = 40$. In contrast to Ex.\ \ref{example:LW2019_84_global}, the assumption of Thm.\ \ref{thm:quad_decrease_property} (i.e., \ref{assum:B2}) is violated for the second instance (since $|S| > n+1$, which implies that \ref{enum:B2_3} cannot hold). Fig.\ \ref{fig:Example_LW2019_85_global} shows the results of applying \algGlobal{} to each instance with initial point $x^0 = (2,1,\dots,1)^\top \in \R^n$ and with $q = p = 1$ for the first and $q = p = 2$ for the second instance.
    \begin{figure}
        \centering
        \parbox[b]{0.49\textwidth}{
            \centering 
            \includegraphics[width=0.30\textwidth]{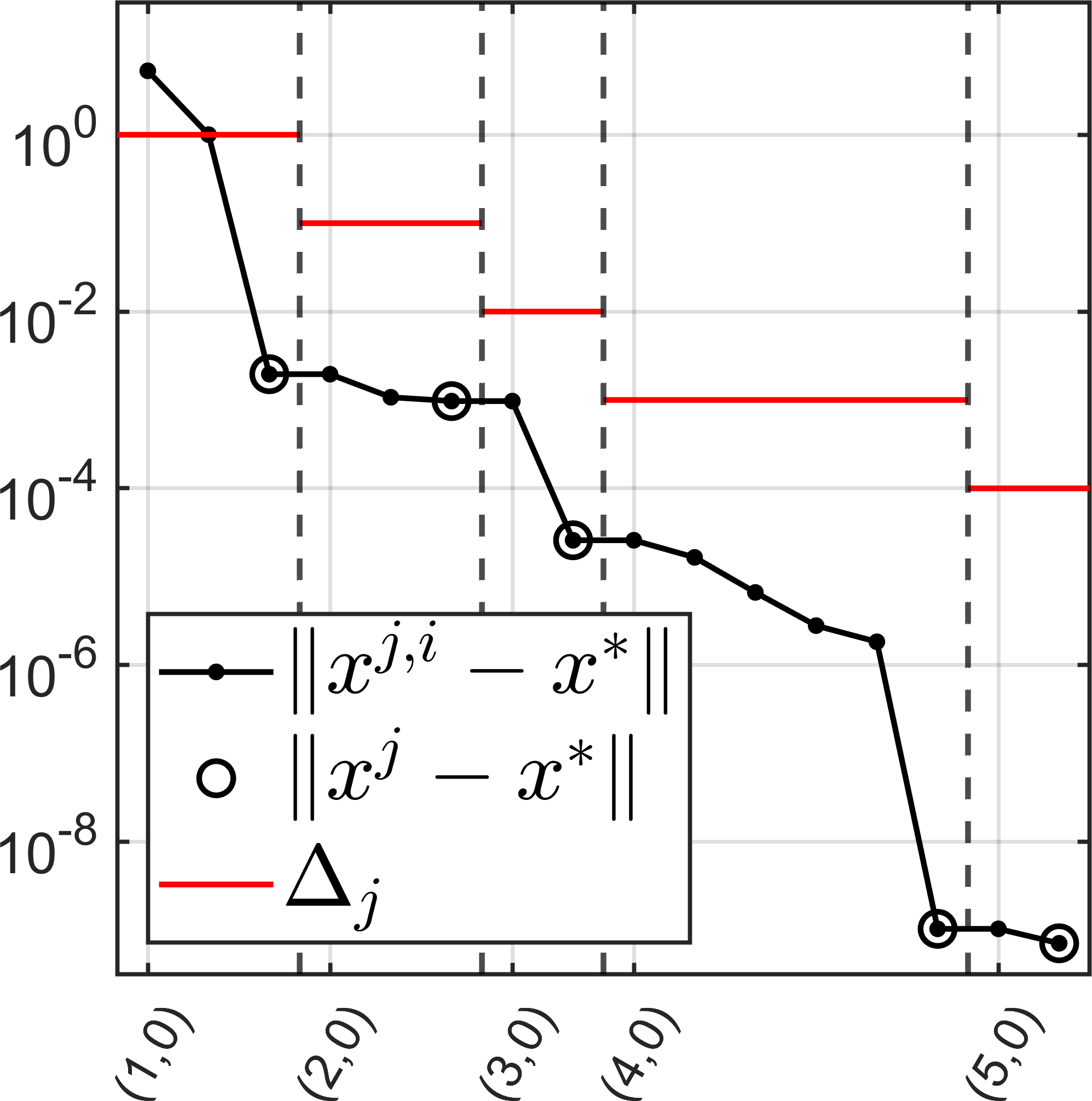}\\
        }
        \parbox[b]{0.49\textwidth}{
            \centering 
            \includegraphics[width=0.30\textwidth]{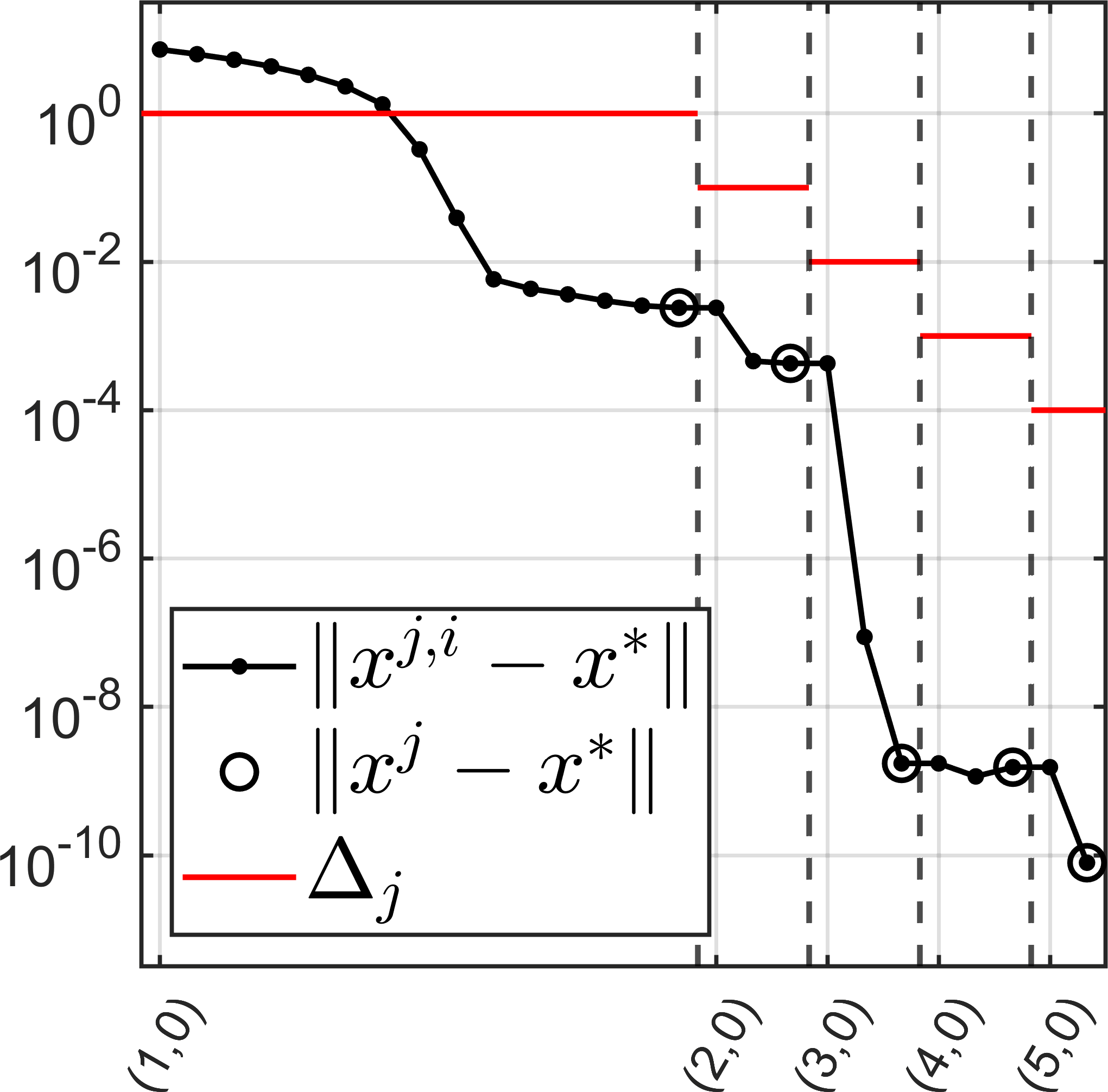}\\
        }
        \caption{The distance $\| x^{j,i} - x^* \|$ in Ex.\ \ref{example:LW2019_85_global} for $n = 25$, $m = 100$ (left) and $n = 50$, $m = 40$ (right), in the same style as in Fig.\ \ref{fig:Example_LW2019_84_global}.}
        \label{fig:Example_LW2019_85_global}
    \end{figure}
    We again see that $x^* \in \Bcl_{\Delta_j}(x^j)$ holds for all $j \in \{1,\dots,5\}$ in both instances, despite \ref{assum:B2} being violated for the instance with quadratic growth.
\end{example}

The previous example suggests that Thm.\ \ref{thm:quad_decrease_property} may also hold under weaker assumptions than \ref{assum:B2}. This is further emphasized by our final example in this subsection, where we consider a convex problem from eigenvalue optimization \cite{O1992,FN1995,LW2019}:
 
\begin{example} \label{example:LW2019_eigval_global}
    For $n, m \in \N$ and symmetric matrices $A_0, \dots, A_n \in \R^{m \times m}$, consider the function
    \begin{align*}
        f : \R^n \rightarrow \R, \quad
        x \mapsto \lambda_{\text{max}}\left( A_0 + \sum_{i = 1}^m x_i A_i \right),
    \end{align*}
    where $\lambda_{\text{max}}(A)$ denotes the largest eigenvalue of a matrix $A$. This function satisfies \ref{assum:B1} (on bounded sets) due to its convexity (cf.\ \cite{RW1998}, Thm.\ 10.33). However, in general, there is no representation with a finite $S$ (cf.\ \cite{O1992}, p.\ 89), such that $f$ is not a finite max-type function. The function $f$ is bounded below if and only if there is no $x$ for which $\sum_{i = 1}^m x_i A_i$ is positive definite (cf.\ \cite{FN1995}, p.\ 227). We generate a random instance of this problem for $n = 50$ and $m = 25$ and apply \algGlobal{} with $q = p = 2$ and initial point $x^0 = (1,\dots,1)^\top$. The result is shown in Fig.\ \ref{fig:Example_LW2019_eigval}(a).
    \begin{figure}
        \centering
        \parbox[b]{0.32\textwidth}{
            \centering 
            \includegraphics[width=0.31\textwidth]{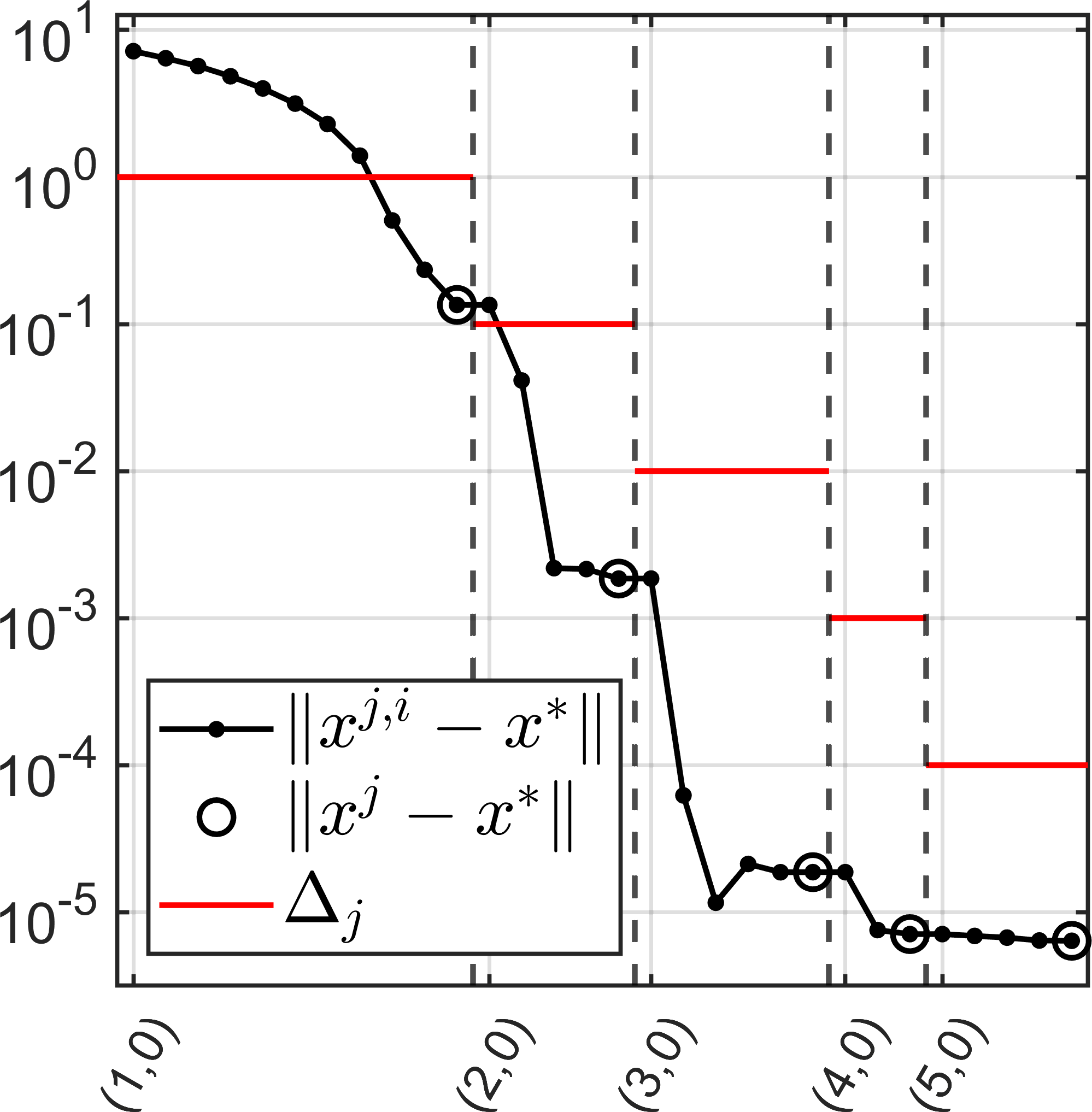}\\
            (a)
        }
        \parbox[b]{0.32\textwidth}{
            \centering 
            \includegraphics[width=0.30\textwidth]{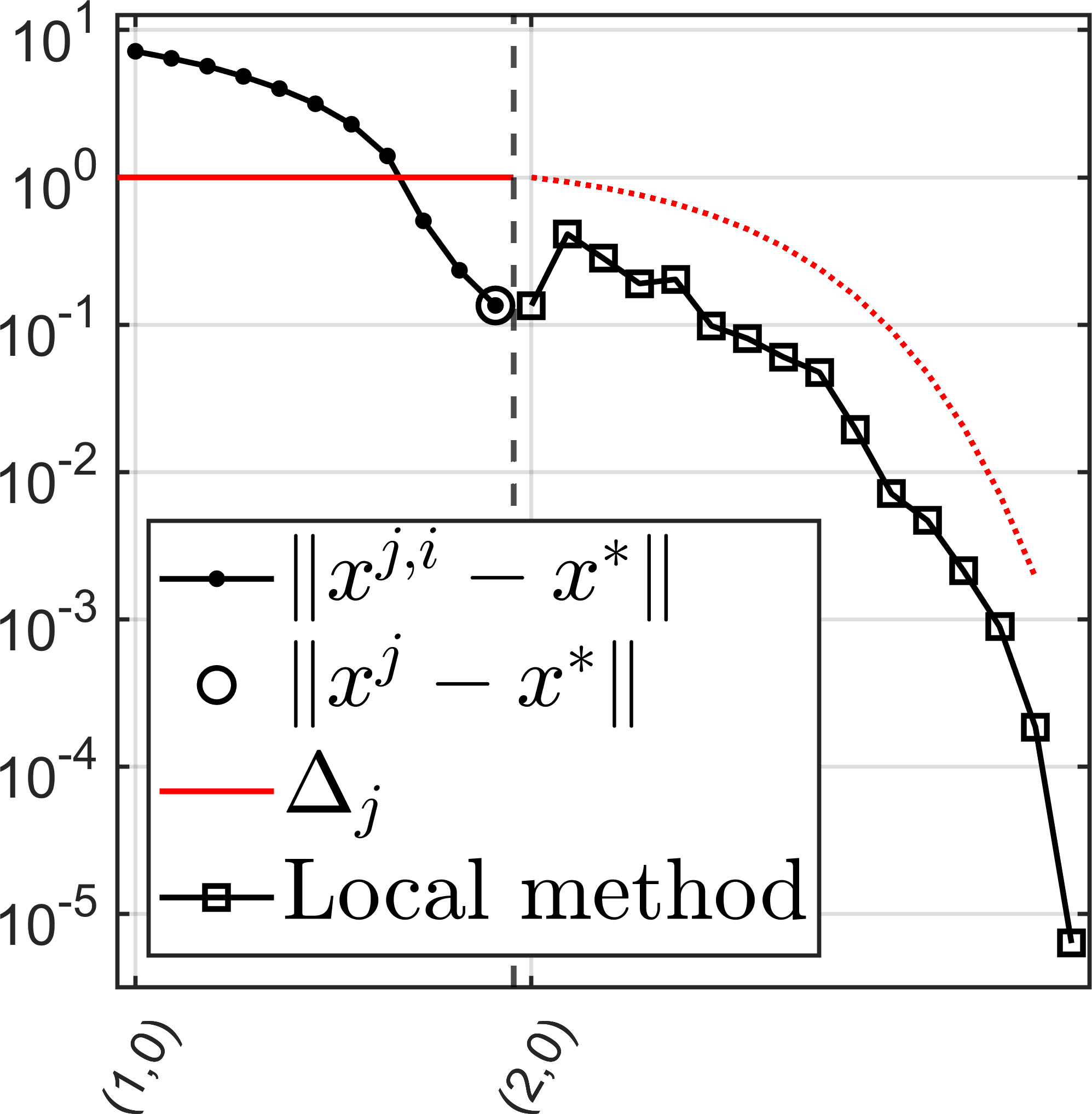}\\
            (b)
        }
        \parbox[b]{0.32\textwidth}{
            \centering 
            \includegraphics[width=0.30\textwidth]{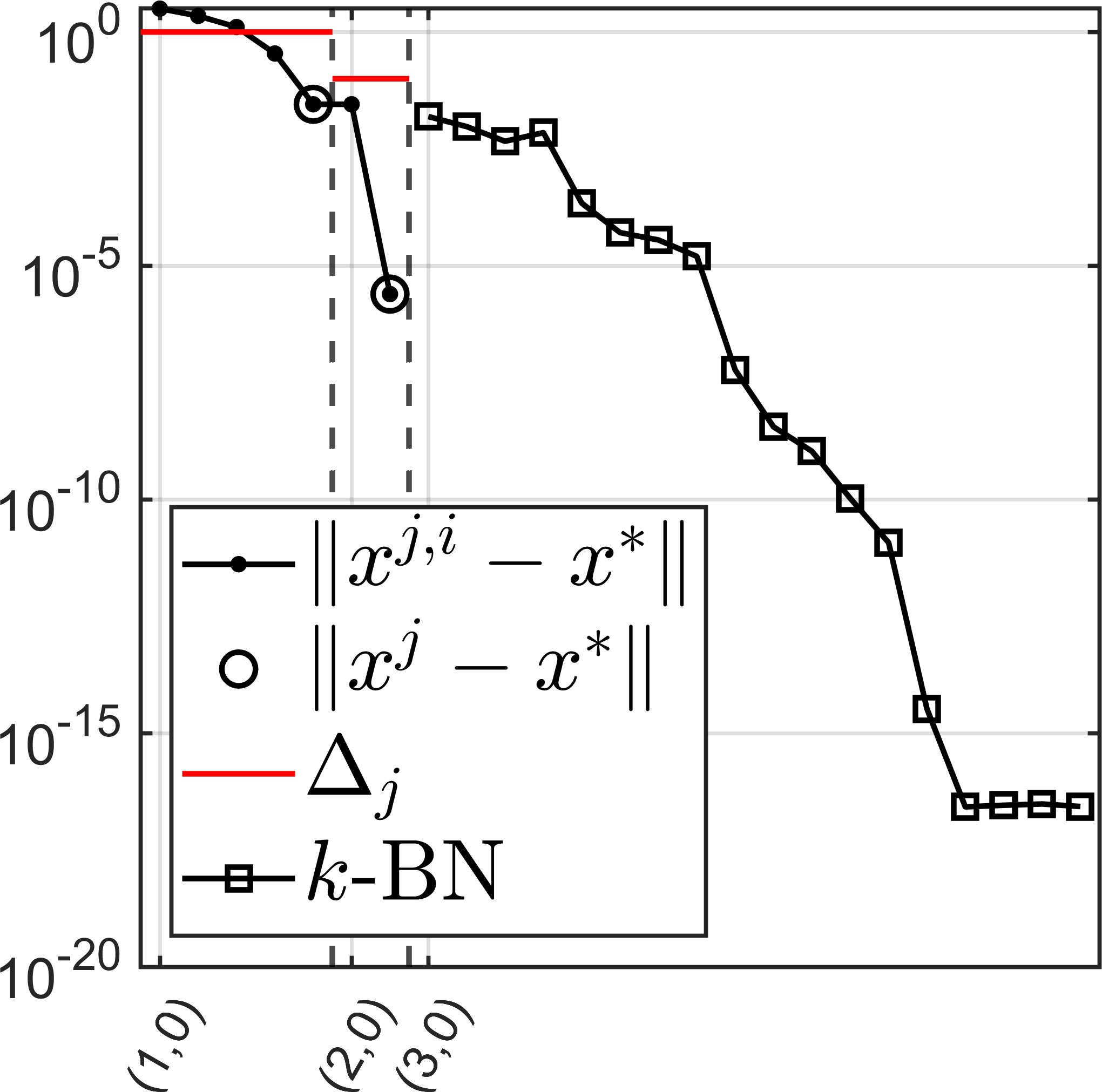}\\
            (c)
        }
        \caption{(a) The distance $\| x^{j,i} - x^* \|$ in Ex.\ \ref{example:LW2019_eigval_global}, in the same style as in Fig.\ \ref{fig:Example_LW2019_84_global}. (b) The distance $\| x^{j,i} - x^* \|$ for \algGlobal{} in Ex.\ \ref{example:LW2019_eigval_local} and the distance $\| \hat{x}^j - x^* \|$ for the sequence $(\hat{x}^j)_j$ generated by the local method from \cite{GU2026a}, with initial point $x^1$ and initial trust-region radius $\Delta_1$. The red, dotted line shows the trust-region radius of the local method. (c) Same as (b), but for Ex.\ \ref{example:LW2019_84_local} and for the sequence of $\hat{x}$ generated via the $k$-bundle Newton method ($k$-BN) from \cite{LW2019}.}
        \label{fig:Example_LW2019_eigval}
    \end{figure}
    (Since the exact value of $x^*$ is unknown, we approximated it via the \texttt{HANSO}\footnote{\url{https://cs.nyu.edu/~overton/software/hanso/} (Retrieved Mar.\ 24, 2026)} software package.) We see that $x^* \in \Bcl_{\Delta_j}(x^j)$ holds for all $j \in \{1,\dots,5\}$, despite $f$ not being a finite max-type function.
\end{example}

\begin{remark}
    Note that in all examples in this subsection, we did not only have $x^* \in \Bcl_{\Delta_j}(x^j)$ for all $j$, but even $x^* \in \Bcl_{\Delta_j}(x^{j,i})$ for $j$ large enough and all $i \in \{0,\dots,N_j\}$. However, this stronger property does not hold in general: For the toy example $f : \R \rightarrow \R$, $x \mapsto x^2$, with $x^0 = 1/2$, $\Delta_j = (1/2)^{j^2} - (1/2)^{(j+1)^2}$ and $\tau_j = 2 \frac{(1/2)^{2(j+1)^2}}{\Delta_j^2}$ for all $j \in \N$, one can show that $x^{j,0} = (1/2)^{j^2} > \Delta_j$ for all $j \in \N$. A visualization of this example can be found in the provided code.
\end{remark}

\subsection{Combination with local R-superlinear methods}

    To achieve R-superlinear convergence (of serious steps), the local method from \cite{GU2026a} (Alg.\ 4.2) requires an initial point $\hat{x}^1$ and an initial trust-region radius $\hat{\eps}^1$ such that $x^* \in \Bcl_{\hat{\eps}^1}(\hat{x}^1)$ and $\hat{\eps}^1$ is small enough (cf.\ \cite{GU2026a}, Thm.\ 4.4). If the requirements of Cor.\ \ref{cor:minimum_in_trust_region} are satisfied, then there is some $j \in \N$ such that $\hat{x}^1 = x^j$ and $\hat{\eps}^1 = \Delta_j$ from \algGlobal{} are valid choices for this initial data. Furthermore, invalid choices for the initial data can be detected by the trust-region constraint being active during the local method (see Sec.\ 5.1 in \cite{GU2026a} for details). As such, if the local method is applied after Step \ref{state:global_method_change_j} in every outer $j$-iteration of \algGlobal{}, then eventually, there is an application that converges R-superlinearly to the minimum. This is demonstrated in the following example:
     
    \begin{example} \label{example:LW2019_eigval_local}
        Consider the function from Ex.\ \ref{example:LW2019_eigval_global} and the same generated instance. Fig.\ \ref{fig:Example_LW2019_eigval}(b) shows the first outer $j$-iteration of \algGlobal{} and then an application of the local method from \cite{GU2026a} (using the same parameters as in \cite{GU2026a}, Sec.\ 6) with initial data $\hat{x}^1 = x^1$ and $\hat{\eps}_1 = \Delta_1$. During this run, the trust-region constraint in the local method is never active, resulting in the expected R-superlinear convergence.
    \end{example}
     
    As discussed in Sec.\ \ref{sec:introduction}, there are other methods that require local information of $f$ around its minimum to converge. For example, the $k$-bundle Newton method from \cite{LW2019} requires a ``full bundle'' close to the minimum, which is a set of points such that for each selection function, there is exactly one point in the bundle where this selection function is active. Considering the way in which \algApproxW{} explores the nonsmooth structure of $f$, we believe it may be possible to prove that for functions satisfying \ref{assum:B2}, \algApproxW{} computes such a full bundle when $x^* \in \Bcl_\Delta(x)$ and $\Delta$ is small enough. This would then make it possible to globalize the $k$-bundle Newton method via \algGlobal{}, as it was done for the method of \cite{GU2026a} above. We leave the theoretical analysis this requires for future research, and only demonstrate the idea in the following example:
     
    \begin{example} \label{example:LW2019_84_local}
        Consider the function from Ex.\ \ref{example:LW2019_84_global} and a random instance with $n = 10$ and $m = 5$. Fig.\ \ref{fig:Example_LW2019_eigval}(c) shows the first two outer $j$-iterations of \algGlobal{} (with no memory, i.e., $W^1 = \{ x^{j,i} \}$ is used in Step \ref{state:global_method_approx_W}) and then an application of the $k$-bundle Newton method initialized with the final bundle $W_{j,i}$ that was computed in Step \ref{state:global_method_approx_W} of \algGlobal{} in the final inner $i$-iteration before the $i$-loop was broken. (In other words, $W_{j,i}$ was computed via \algApproxW{} with $x = x^2$ and $\Delta = \Delta_2$.) The superlinear convergence that can be observed suggests that a valid initial bundle was provided for the $k$-bundle Newton method. 
    \end{example}

\section{Discussion and outlook} \label{sec:discussion_and_outlook}

We have constructed a trust-region bundle method for \lc{2} functions that is able to compute infinitely many trust regions that contain the minimum  if the property \eqref{eq:property_P} holds. We have shown that for sharp minima, property \eqref{eq:property_P} holds for all finite max-type functions, and for quadratic minima, it holds for the more well-behaved functions satisfying \ref{assum:B2}. There are several directions for future research:
\begin{itemize}
    \item For $q = 1$ and large $\Delta$, \algApproxW{} may require many iterations due to the classic stability issues of cutting-plane models described in \cite{BGLS2006}, p. 134. To avoid these issues, it may be possible to modify the models used in \algApproxW{} by introducing some type of stabilization, as discussed in \cite{BGLS2006}, Sec.\ 10.1.
    \item For the sake of simplicity, we did not try to derive a criterion for adaptively increasing the trust-region radius if the model would be trustworthy on a larger trust region. Doing so could improve the performance, especially for initial points far away from the minimum.
    \item While for the convergence of \algGlobal{}, $(\tau_j)_j$ can be any vanishing sequence, the specific choice may have a significant impact on the performance: On the one hand, the larger $\tau_j$, the earlier the inner $i$-loop is broken. For monotonically decreasing $(\Delta_j)_j$, this means that the method decreases the trust-region radius sooner. As a result, the method may perform relatively short steps even when far away from the minimum. On the other hand, the smaller $\tau_j$, the longer the algorithm may stay in the inner $i$-loop, even when the decrease in the objective value becomes small. As a result, the method may perform many steps with a trust region that is too large. To avoid both behaviors, a theoretical analysis on the impact of $(\tau_j)_j$ on the performance is required.
    \item It may be possible to prove that \eqref{eq:property_P} holds for arbitrary $p$ by replacing the second-order Taylor expansions in Lem.\ \ref{lem:quad_pos_definite}, Lem.\ \ref{lem:quad_isolated_critical_point}, and Thm.\ \ref{thm:quad_decrease_property} by $p$-order expansions and then carefully estimating the individual terms.
    \item The generalization of Sec.\ \ref{sec:property_P_for_max_type} to \lc{2} functions with infinite $S$ does not appear to be straightforward. In Part 1 of the proofs of both Thm.\ \ref{thm:sharp_decrease_property} and Thm.\ \ref{thm:quad_decrease_property}, we were only able to assume that the active set along $(z^j)_j$ is constant since $S$ was finite. For infinite $S$ this is no longer possible, which means that in all other parts of both proofs, the index set $S'$ depends on iteration index $j$, making the analysis significantly more challenging. 
\end{itemize}

\vspace{10pt}

\setlength{\bibsep}{0pt plus 0.3ex}

\noindent \textbf{Acknowledgements.} \quad This research was funded by Deutsche Forschungsgemeinschaft (DFG, German Research Foundation) – Projektnummer 545166481.

\bibliography{references}

\end{document}